\newcommand{\df}{\dfrac}
\renewcommand{\pmod}[1]{\,(\textup{mod}\,#1)}
 \theoremstyle{plain}
\numberwithin{equation}{section}
\newtheorem{theorem}{Theorem}[section]
\newtheorem{proposition}[theorem]{Proposition}
\newtheorem{corollary}[theorem]{Corollary}
\newtheorem{conjecture}[theorem]{Conjecture}
\newtheorem{lemma}[theorem]{Lemma}
\newtheorem{definition}[theorem]{Definition}
\begin{document}
\title[An Arithmetic Sum Associated with the Classical Theta Function]{An Arithmetic Sum Associated \\with the Classical Theta Function}
\author[B.~C.~Berndt, R.~Bhat, J.~Meyer, L.~Xie, A.~Zaharescu]{Bruce C.~ Berndt, Raghavendra Bhat, Jeffrey L.~ Meyer,\\Likun Xie,  Alexandru Zaharescu}
\address{Department of Mathematics, University of Illinois, 1409 West Green
Street, Urbana, IL 61801, USA} \email{berndt@illinois.edu}
\address{Department of Mathematics, University of Illinois, 1409 West Green
Street, Urbana, IL 61801, USA} \email{rnbhat2@illinois.edu}
\address{Department of Mathematics, 215 Carnegie Library, Syracuse University,
Syracuse, NY 13244, USA} \email{jlmeye01@syr.edu}
\address{Max-Planck-Institut f\"{u}r Mathematik, Vivatsgasse 7, 53111, Bonn, Germany} \email{xie@mpim-bonn.mpg.de}
\address{Department of Mathematics, University of Illinois, 1409 West Green
Street, Urbana, IL 61801, USA; Institute of Mathematics of the Romanian
Academy, P.O.~Box 1-764, Bucharest RO-70700, Romania}
\email{zaharesc@illinois.edu}

\begin{abstract}
The sum $S(h,k):=\sum_{j=1}^{k-1}(-1)^{j+1+[hj/k]}$ appears in the modular transformation formulae of the classical theta function $\vartheta_3(z)$. The double sum
$S(k) := \sum_{h=1}^{k-1}S(h,k)$ has a remarkable distribution of values.  Although properties for $S(k)$ and a related sum can be established, several interesting conjectures are open.
\end{abstract}

\subjclass[2020]{Primary 11N56; Secondary  11N37, 11F20}
\keywords{arithmetic sums, theta function $\vartheta_3(z)$, analogue of Dedekind sum, asymptotic formula, Riemann zeta function}

\maketitle

\section{Introduction and Motivation}\label{section1} The arithmetical sum
\begin{equation}\label{defSkh}
S(h,k) := \sum_{j=1}^{k-1}(-1)^{j+1+[hj/k]},
\end{equation}
where $h$ and $k$ are positive integers and $[x]$ is the greatest integer less than or equal to $x$, appears in the transformation formula for the classical theta function $\vartheta_3(z)$. The sum $S(h,k)$ is analogous to the classical Dedekind sum $s(h,k)$, which appears in the transformation formula for the Dedekind eta-function $\eta(z)$.

For each positive integer $k$, define
\begin{equation}\label{defS}
S(k) := \sum_{h=1}^{k-1}S(h,k).
\end{equation}
Initial calculations led us to make the following conjecture.
\begin{conjecture}\label{mainconjecture} For each odd prime $k>5$,
\begin{equation*}
S(k)>0.
\end{equation*}
\end{conjecture}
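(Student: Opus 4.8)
The plan is to collapse the double sum to a single one by exploiting the parity of the summand, then isolate an explicit main term whose size forces positivity. Writing $T(j,k):=\sum_{h=1}^{k-1}(-1)^{\lfloor hj/k\rfloor}$, one has $S(k)=\sum_{j=1}^{k-1}(-1)^{j+1}T(j,k)$. Since $k$ is prime, $\gcd(j,k)=1$ and $hj/k\notin\mathbb{Z}$ for $1\le h\le k-1$, so $\lceil hj/k\rceil=\lfloor hj/k\rfloor+1$ and the reflection $h\mapsto k-h$ gives $\lfloor (k-h)j/k\rfloor=j-1-\lfloor hj/k\rfloor$, whence $(-1)^{\lfloor(k-h)j/k\rfloor}=(-1)^{j-1}(-1)^{\lfloor hj/k\rfloor}$. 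For even $j$ the summand is odd under $h\mapsto k-h$ and, as $k$ is odd, the terms pair off and cancel, so $T(j,k)=0$. Therefore
\[
S(k)=\sum_{\substack{1\le j\le k-1\\ j\ \mathrm{odd}}}T(j,k),
\]
and the single largest term is $T(1,k)=k-1$.

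Next I would put each $T(j,k)$ in a form amenable to estimation. Counting, for odd $j$, the number of $h$ at each floor level and telescoping (equivalently, writing $\lfloor mk/j\rfloor=mk/j-\{mk/j\}$ and using $\sum_{m=1}^{j-1}(-1)^m m=(j-1)/2$) yields the exact evaluation
\[
T(j,k)=\frac{k}{j}-1+2\sum_{m=1}^{j-1}(-1)^m\left\{\frac{mk}{j}\right\}.
\]
Summing over odd $j$ then splits $S(k)$ into a transparent main term and a remainder,
\[
S(k)=\sum_{\substack{1\le j\le k-1\\ j\ \mathrm{odd}}}\left(\frac{k}{j}-1\right)+R(k),\qquad R(k):=2\sum_{\substack{1\le j\le k-1\\ j\ \mathrm{odd}}}\ \sum_{m=1}^{j-1}(-1)^m\left\{\frac{mk}{j}\right\}.
\]
Since $\sum_{\text{odd }j\le k-1}1/j=\tfrac12\log k+O(1)$, the main term equals $\tfrac{k}{2}\log k+O(k)$ with positive leading coefficient; denote it $M(k)$, which grows without bound and is positive for large $k$.

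The conjecture would then follow from a single cancellation estimate: it suffices to prove $R(k)=o(k\log k)$, or merely $R(k)>-M(k)$, together with a direct numerical check of the finitely many small primes. To control $R(k)$ I would pass to a cotangent representation coming from the Fourier expansion of the square wave $(-1)^{\lfloor x\rfloor}$. Because $hj/k\notin\mathbb{Z}$ and $(2\ell+1)j$ is odd while $2k$ is even (so $2k\nmid(2\ell+1)j$ and no poles arise), summing over $h$ gives $T(j,k)=\tfrac{4}{\pi}\sum_{\ell\ge0}\tfrac{1}{2\ell+1}\cot\!\big(\tfrac{(2\ell+1)\pi j}{2k}\big)$, and hence $S(k)=\tfrac{4}{\pi}\sum_{\ell\ge0}\tfrac{1}{2\ell+1}C_\ell$ with $C_\ell:=\sum_{\text{odd }j}\cot\!\big(\tfrac{(2\ell+1)\pi j}{2k}\big)$. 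For $\ell=0$ all arguments lie in $(0,\pi/2)$, so $C_0>0$ and is already of size $\asymp k\log k$; the problem is to show that the $\ell\ge1$ contributions, which are no longer sign-definite, cannot overcome it.

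The hard part will be exactly this bound on $R(k)$ (equivalently, on $\sum_{\ell\ge1}C_\ell/(2\ell+1)$). The remainder is an alternating Dedekind-type sum: each inner sum $\sum_m(-1)^m\{mk/j\}$ may be positive or negative (for instance $T(7,11)=-2$), there are about $k/2$ of them, and the trivial bound $O(j)$ per term gives only $R(k)=O(k^2)$, far too weak. Extracting the genuine cancellation — presumably through bounds for cotangent sums and classical Dedekind sums, and the continued-fraction expansion of $j/k$ — is the crux of the argument, and is, I expect, precisely the point at which the statement currently remains only a conjecture.
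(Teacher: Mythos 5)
This statement is Conjecture \ref{mainconjecture}; the paper offers no proof of it and explicitly describes it as appearing very difficult to prove, so there is nothing to compare your argument against except the partial results of Section \ref{lowerbounds}. Your reductions up to the exact identity are correct: the vanishing of the inner sum for even $j$ agrees with \eqref{S_k_expression}, your formula $T(j,k)=\frac{k}{j}-1+2\sum_{m=1}^{j-1}(-1)^m\{mk/j\}$ checks out (e.g.\ it gives $T(7,11)=-2$), and summing it over odd $j$ turns the paper's Theorem \ref{theorem2} into an identity: inserting the trivial bound $\sum_{m}(-1)^m\{mk/j\}>-(j-1)/2$ into your decomposition reproduces exactly $S(k)\geq -\frac{k-1}{2}+kH(k)-\frac{(k-1)(k+1)}{4}$, whose dominant term is $-k^2/4$. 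That comparison makes the size of the gap explicit: your main term $M(k)=kH(k)-\frac{k-1}{2}\sim\frac{k}{2}\log k$, while the remainder $R(k)$ is trivially only $O(k^2)$, so you must beat the trivial bound by a factor of essentially $k/\log k$ on average over $j$.

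That improvement is the entire content of the conjecture, and nothing in your proposal supplies it. The inner sums $\sum_{m}(-1)^m\{mk/j\}$ are alternating Dedekind-type sums whose size for an individual $j$ is governed by the continued fraction of $k/j$; they can be of order $j$ in absolute value and of either sign, so one genuinely needs cancellation over $j\leq k-1$, which is of the same depth as the distribution problem for the residues $hj\pmod{2k}$ that the paper itself identifies as the fundamental obstruction. The cotangent series is a plausible reformulation (the $\ell=0$ term is indeed positive and of size $\asymp k\log k$), but the tail $\sum_{\ell\geq 1}C_\ell/(2\ell+1)$ is just $R(k)$ in another guise, and no bound for it is proposed beyond the hope that cotangent-sum or continued-fraction estimates will suffice --- a hope you yourself flag as the unproven crux. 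So what you have is a correct and internally consistent reduction of the conjecture to a single cancellation estimate, slightly sharper in form than the paper's Theorem \ref{theorem2} because it is an identity rather than an inequality, but it is not a proof; the missing estimate is precisely the open problem. (Note also that to cover all primes $k>5$, rather than all sufficiently large primes, any eventual bound on $R(k)$ would need to be effective so that the finite check is actually finite and specified.)
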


Conjecture \ref{mainconjecture} appears to be very difficult to prove.
Further conjectures arising from our calculations are given in Section \ref{conjectures}.

\emph{A priori}, since $S(k)$ is the sum of $(k-1)^2$  summands, each equal to either $+1$ or $-1$, one naturally would expect that neither sign would dominate; i.e., $S(k)$ would be positive roughly half of the time.  But this intuition is apparently misleading.  From the definition of $S(k)$ in \eqref{defS} and our arguments  in the sequel,
we see that the difficulties we face in proving Conjecture \ref{mainconjecture} arise from a more fundamental problem, namely, our lack of knowledge about the distribution of the residues of $hj$ modulo $k$, where $k$ is prime and $1\leq j,h \leq k-1$.

In this paper, we shall also study analogues of $S(h,k)$ and $S(k)$.
To that end,  define
\begin{equation}\label{defT}
T(k) := \sum_{h=1}^{2k-1}T(h,k),
\end{equation}
where
\begin{equation}\label{1.1T}
T(h,k) := \sum_{j=1}^{2k-1}(-1)^{j+1+[hj/k]}.
\end{equation}
In contrast to $S(k)$, we obtain in Section \ref{asymptotic,1} an asymptotic formula for a sum closely related to $T(k)$.

There exists an extensive literature on the Dedekind sum $s(h,k)$. E.g., see the treatise of H.~Rademacher and E.~Grosswald \cite{rg}.  In contrast, little is known about $S(h,k)$. In Section \ref{elementary}, several elementary properties of $S(h,k), S(k), T(h,k)$, and $T(k)$ are proved.

At a more recondite level, the map which sends a pair $(j,k)$ to the congruence class of $hj$ modulo $k$
describes a surface modulo $k$, and one may reinterpret the main questions
discussed in this paper in terms of some delicate/subtle properties of this surface. We
remark in passing that those level curves on this surface given by congruences of the form $hj\equiv C \pmod{k}$,
where $C$ and $k$ are co-prime, are modular hyperbolas modulo $k$. Modular
hyperbolas come with their own interesting distribution problems, results, and conjectures. For a survey on modular
hyperbolas, the reader is referred to I.~E.~Shparlinski's comprehensive paper \cite{shparlinski}.

We close our introduction with an outline of the remainder of the paper.  In the next section, we offer the transformation formula of $\vartheta_3(z)$. Section \ref{conjectures} provides the conjectures, which motivated this paper.   In Section \ref{odd}, we establish some preliminary results, which are needed in Section \ref{asymptotic,1}, in which we prove an asymptotic formula for a sum closely related to $T(k)$. In Section \ref{lowerbounds}, we employ both elementary and analytic methods to establish several lower bounds for $S(k)$, which point out the difficulties we face in proving the conjectures of Section \ref{conjectures}.  Lastly, we establish some elementary facts about $S(k)$ in Section \ref{elementary}.

\section{Transformation Formulas and Background for $S(h,k)$ and $S(k)$}\label{trans}
 Recall that the Dedekind eta-function $\eta(z)$ is defined by
\begin{equation*}
 \eta(z):=q^{1/24}\prod_{n=1}^{\infty}(1-q^n), \quad q=e^{2\pi iz}, \quad z\in \mathbb{H}.
 \end{equation*}
 The Dedekind eta-function satisfies  a well-known transformation formula
\cite[p.~145]{rademacher}.   (Here, and in what follows, $\log w$ denotes the principal value of the logarithm.) If $c$ and $d$ are co-prime integers with $c>0$, and if
$$V(z)=\df{az+b}{cz+d},\quad z \in\mathbb{H},$$
then
\begin{equation*}
\log\{\eta(V(z))\}=\log\{\eta(z)\}+\df{1}{2}\log\left(\df{cz+d}{i}\right)+\pi i s(-d,c),
\end{equation*}
where $s(d,c)$ is  the Dedekind sum defined by
\begin{equation*}
s(d,c):=\sum_{j=1}^{c-1}\left(\left(\df{dj}{c}\right)\right)\left(\left(\df{j}{c}\right)\right),
\end{equation*}
and where
\begin{equation*}
((x)):=\begin{cases} x-[x]-\tfrac12, \quad&\text{ if } x\notin \mathbb{Z},\\
0,&\text{ if } x\in \mathbb{Z}.
\end{cases}
\end{equation*}
Dedekind sums satisfy a famous reciprocity theorem \cite[p.~4, Theorem 1]{rg}.  If $c$ and $d$ are co-prime, positive integers, then
\begin{equation}\label{reciprocity2}
s(c,d)+s(d,c)=-\df14 +\df{1}{12}\left(\df{c}{d}+\df{1}{cd}+\df{d}{c}\right).
\end{equation}

 Next, recall that the classical theta function $\vartheta_3(z)$ is defined by
\begin{equation*}
\vartheta_3(z):=\sum_{n=-\infty}^{\infty}q^{n^2}, \quad q=e^{\pi iz}, \quad z\in \mathbb{H}.
\end{equation*}
The function  $\log\{\vartheta_3(z)\}$ obeys a modular transformation formula  \cite[p.~339, Theorem 4.1]{berndt-eisenstein}.
 If $c$ and $d$ are co-prime integers of opposite parity with $c>0$, and if
$$V(z)=\df{az+b}{cz+d}, \quad z\in\mathbb{H},$$
then
\begin{equation*}
\log\{\vartheta_3(V(z)\}=\log\{\vartheta_3(z)\}+\frac12\log(cz+d)-\frac14 \pi i + \frac14 \pi i S(d,c),
\end{equation*}
where $S(d,c)$ is defined by \eqref{defSkh}.
The sum $S(d,c)$ satisfies a beautiful reciprocity theorem \cite[p.~339, Theorem 4.2]{berndt-eisenstein}.  Let $c$ and $d$ be positive, co-prime integers of opposite parity.  Then,
\begin{equation}\label{reciprocity}
S(d,c)+S(c,d)=1.
\end{equation}
(Note that the sum $S(d,c)$ has one less term than the corresponding sum appearing in \cite[p.~339]{berndt-eisenstein}.)
 Thus,  $S(d,c)$ is an analogue of the Dedekind sum $s(d,c)$, and   the reciprocity theorem \eqref{reciprocity} is an analogue of \eqref{reciprocity2}.

\section{Numerical Calculations and Conjectures}\label{conjectures}

The tables that follow give the first few values for $S(k)$ and $T(k)$.  In contrast to $S(k)$, all of the values of $T(k)$ are negative. See Lemma \ref{classify}, which verifies this observation.

\begin{table}[!htb]
  \caption{$S(k)$ values}
  \centering
  \small 
  \setlength{\tabcolsep}{3pt} 
  \begin{tabular}{|c|c|c|c|c|c|c|c|c|c|c|c|c|c|c|c|c|c|c|c|c|}
    \hline
    \textbf{$k$}& 1& 2& 3& 4& 5& 6& 7& 8& 9& 10& 11& 12& 13& 14& 15& 16& 17& 18& 19& 20\\
    \hline
    \textbf{$S(k)$} &0 & 1& 2& 5& 4& 7& 10& 11& 11& 8& 17& 14& 21& 20& 15& 18& 39& 24& 21& 38\\
    \hline
  \end{tabular}
\end{table}

\begin{table}[!htb]
  \caption{$T(k)$ values}
  \centering
  \small 
  \setlength{\tabcolsep}{3pt} 
  \begin{tabular}{|c|c|c|c|c|c|c|c|c|c|c|c|c|c|c|c|c|c|c|c|c|}
    \hline
    \textbf{$k$}& 1& 2& 3& 4& 5& 6& 7& 8& 9& 10& 11& 12& 13& 14& 15& 16& 17& 18& 19& 20\\
    \hline
    \textbf{$T(k)$} &-1 & -1& -5& -1& -9& -9& -13& -1& -25& -17& -21& -17& -25& -25& -61& -1& -33& -49& -37& -33\\
    \hline
  \end{tabular}
\end{table}

Numerical calculations suggest that Conjecture \ref{mainconjecture} can be strengthened. First, there are only 17 primes less than 10000, namely,     2, 3, 5, 7, 11, 13, 17, 23, 29, 41, 53, 59, 83, 113, 149, 179, 233,  for which $S(k)$ is  less than $2k$.  Thus, we record the following conjecture.

\begin{conjecture}\label{conjecturemain3}
For each prime $k> 233$,
\begin{equation*}
    S(k)>2k.
    \end{equation*}
    \end{conjecture}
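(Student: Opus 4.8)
The plan is to strip away the half of the double sum \eqref{defS} that cancels and then attack what remains by harmonic analysis, mirroring the treatment of $T(k)$ in Section \ref{asymptotic,1}. First I would record that, for odd $k$,
\begin{equation*}
(-1)^{[hj/k]}=(-1)^{hj+(hj\bmod k)},
\end{equation*}
which follows by writing $hj=[hj/k]\,k+r$ with $r=hj\bmod k$, so that $hj+r=[hj/k]\,k+2r\equiv[hj/k]\pmod 2$ because $k$ is odd. Substituting into \eqref{defSkh} gives $(-1)^{j+1+[hj/k]}=(-1)^{j+1+hj}(-1)^{r}$. For odd $h$ the factor $(-1)^{j+1+hj}$ is identically $-1$, and as $j$ runs over $1,\dots,k-1$ the residues $r=hj\bmod k$ permute $1,\dots,k-1$, whence $S(h,k)=-\sum_{r=1}^{k-1}(-1)^{r}=0$. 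Thus only even $h$ contribute to $S(k)$; equivalently, by the reciprocity law \eqref{reciprocity} applied to the (opposite-parity) even $h$, one may write $S(k)=\tfrac{k-1}{2}-\sum_{h\text{ even}}S(k,h)$.

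Next I would expand the sign as a square wave, $(-1)^{[hj/k]}=\operatorname{sgn}(\sin(\pi hj/k))=\tfrac4\pi\sum_{m\ge0}(2m+1)^{-1}\sin((2m+1)\pi hj/k)$, valid since $k\nmid hj$, and evaluate the sums over $h$ as geometric series using $e^{\pi i/k}$ as a primitive $2k$-th root of unity and $(-1)^{j}=e^{\pi ij}$. The even-$j$ terms drop out and the odd-$j$ terms collapse to cotangents, yielding the exact (conditionally convergent) identity
\begin{equation*}
S(k)=\frac4\pi\sum_{m=0}^{\infty}\frac{1}{2m+1}\sum_{\substack{j=1\\ j\text{ odd}}}^{k-1}\cot\!\left(\frac{(2m+1)\pi j}{2k}\right),
\end{equation*}
where the summand is read as $0$ when $k\mid(2m+1)$; I have verified this numerically in small cases. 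The term $m=0$ equals $\tfrac4\pi\sum_{j\text{ odd}}\cot(\pi j/(2k))$, and since $\cot(\pi j/(2k))\sim 2k/(\pi j)$ for small $j$, this block alone has size $\tfrac{4}{\pi^2}k\log k$. The crossover of $\tfrac{4}{\pi^2}\log k$ past the value $2$ occurs near $k\approx139$, tantalisingly close to the threshold $233$ in Conjecture \ref{conjecturemain3}; this suggests that the stated bound records exactly where the main term overtakes $2k$, up to the erosion caused by the $m\ge1$ terms.

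The hard part will be controlling those $m\ge1$ contributions. The double series is only conditionally convergent, and its summands become singular precisely when $(2m+1)j$ is near a multiple of $2k$ — exactly the locations where the residues $hj\bmod k$ cluster — so making sense of the cancellation is equivalent to understanding the joint distribution of $j$ and the parity of $hj\bmod k$, the very obstacle flagged in Section \ref{section1}. I would attempt to resum over $m$ first, or to invoke the partial-fraction and Kronecker-limit identities familiar from the cotangent theory of Dedekind sums, so as to peel off a clean main term and reduce the problem to showing that the error is $o(k\log k)$ \emph{with a definite sign}. A Weil-type bound on the associated incomplete Kloosterman sums modulo $2k$ controls the \emph{size} of this error but not its \emph{sign}, and it is this sign-definiteness — needed to secure the strict inequality $S(k)>2k$ rather than a mere estimate for $|S(k)|$ — that places the conjecture beyond current reach. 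The finitely many exceptional primes up to $233$ would be disposed of by direct computation.
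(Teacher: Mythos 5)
You are asked to prove Conjecture \ref{conjecturemain3}, and the paper itself does not prove it: it is supported only by computation (the authors list the $17$ primes below $10000$ with $S(k)\leq 2k$, the largest being $233$, and describe the positivity conjectures as very difficult). Your proposal likewise does not prove it, and you concede as much in your final paragraph. Your preliminary steps are correct: the identity $(-1)^{[hj/k]}=(-1)^{hj+(hj\bmod k)}$ for odd $k$ is right; it yields $S(h,k)=0$ for $h$ odd and coprime to $k$ (the paper's Proposition \ref{4.7}, proved there instead by the reflection $j\mapsto k-j$); and the expansion
\begin{equation*}
S(k)=\frac4\pi\sum_{m\ge0}\frac{1}{2m+1}\sum_{\substack{j=1\\ j\ \mathrm{odd}}}^{k-1}\cot\Bigl(\frac{(2m+1)\pi j}{2k}\Bigr)
\end{equation*}
does follow formally from $\sum_{h=1}^{k-1}\sin\bigl((2m+1)\pi hj/k\bigr)=\cot\bigl((2m+1)\pi j/(2k)\bigr)$ for odd $j$, together with the vanishing of that sum for even $j$ — though the interchange of the $m$- and $h$-summations for the conditionally convergent square wave still needs justification. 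The $m=0$ heuristic of size $\tfrac{4}{\pi^2}k\log k$ is consistent with the paper's Conjecture \ref{C1}, and the cotangent form is a natural analogue of the classical cotangent representation of Dedekind sums.

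The gap, however, is the entire substance of the problem: you have reformulated $S(k)$, not estimated it. To conclude $S(k)>2k$ you would need the $m\ge1$ contributions to erode the $m=0$ block by less than $\bigl(\tfrac{4}{\pi^2}\log k-2\bigr)k$ for \emph{every} prime $k>233$, i.e.\ an error term that is both quantitatively small and sign-controlled, uniformly in $k$. Nothing in the proposal supplies this: you note yourself that Weil-type bounds control only the magnitude and not the sign, and ``resumming over $m$ first'' or invoking Kronecker-limit identities is a hope rather than an argument. This is precisely the obstruction the paper identifies — the unknown distribution of the residues $hj\bmod k$ — and the paper's own best unconditional results in Section \ref{lowerbounds} (Theorems \ref{4.9theorem} and \ref{theorem2}) stall at lower bounds that are still negative. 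As a proof the proposal fails; as a reformulation it is sound and arguably sharper than \eqref{meyersum1}, but it establishes nothing beyond the trivial bounds. The numerical coincidence of the crossover near $k\approx139$ with the threshold $233$ is suggestive, not probative.
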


 Moreover, with 3119 being the largest, there are only 87 primes less than 10000 that fail the inequality  $S(k) < 3k$:
    2, 3, 5, 7, 11, 13, 17, 19, 23, 29, 31, 37, 41, 43, 47, 53, 59, 61, 67, 71, 73, 79, 83, 89, 97, 101, 103, 107, 109, 113, 131, 137, 139, 149, 163, 167, 173, 179, 193, 197, 233, 239, 251, 257, 263, 269, 293, 317, 347, 349, 359, 383, 389, 419, 439, 443, 449, 479, 503, 509, 557, 563, 569, 593, 599, 683, 719, 743, 797, 809, 827, 839, 863, 1013, 1019, 1049, 1103, 1229, 1259, 1409, 1733, 1889, 1913, 2339, 2459, 2969, 3119.  Accordingly, we make the following conjecture:

    \begin{conjecture}\label{conjecturemain4}
For each prime $k>3119$,
\begin{equation*}
    S(k)>3k.
    \end{equation*}
    \end{conjecture}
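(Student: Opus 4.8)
The plan is to convert the double sum into a lattice-point count and then attack that count by harmonic analysis. First I would exploit the reflection $h\mapsto k-h$. Since $k$ is prime, $hj$ is never divisible by $k$ for $1\le h,j\le k-1$, so $[(k-h)j/k]=j-[hj/k]-1$ and hence $(-1)^{j+1+[(k-h)j/k]}=(-1)^{[hj/k]}$. Adding the contributions of $h$ and $k-h$ collapses the factor $(-1)^{j+1}$ to $1$ on odd $j$ and to $0$ on even $j$; summing over $h$ and using that $k-h$ runs over the same range as $h$ yields the symmetric form
\begin{equation*}
S(k)=\sum_{h=1}^{k-1}\ \sum_{\substack{j=1\\ j\,\mathrm{odd}}}^{k-1}(-1)^{[hj/k]}.
\end{equation*}

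Next I would record that $[hj/k]$ is odd precisely when $hj\bmod 2k\ge k$, so each summand equals $1-2\,\mathbf 1[hj\bmod 2k\ge k]$. Writing $N$ for the number of pairs $(h,j)$ with $1\le h\le k-1$, $j\in\{1,3,\dots,k-2\}$, and $hj\bmod 2k\ge k$, this gives the exact identity
\begin{equation*}
S(k)=\tfrac12(k-1)^2-2N,
\end{equation*}
so Conjecture~\ref{conjecturemain4} is equivalent to the upper bound $N<\tfrac14\bigl((k-1)^2-6k\bigr)$. The naive ``half the time'' heuristic predicts $N\approx\tfrac14(k-1)^2$, which is exactly why $S(k)$ is a lower-order fluctuation term and why its sign is so subtle: the whole game is to show that the deviation of $N$ from $\tfrac14(k-1)^2$ is negative and genuinely of size $\gg k$.

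I would then detect the condition $hj\bmod 2k\ge k$ with additive characters modulo $2k$ (equivalently, expand the square wave $(-1)^{[x]}$ in its Fourier series, legitimate since $hj/k\notin\mathbb Z$), separating the parity of $j$ through $\mathbf 1[j\,\mathrm{odd}]=\tfrac12(1-(-1)^j)$. This writes $N-\tfrac14(k-1)^2$ as a main term plus a weighted sum of incomplete bilinear exponential sums of the shape
\begin{equation*}
\sum_{h=1}^{k-1}\ \sum_{j\,\mathrm{odd}} e^{\pi i\,n\,hj/k},
\end{equation*}
the weights decaying like $1/n$. Because $k$ is prime, one can hope to estimate each such sum by completing the $j$-sum and invoking Weil/Kloosterman bounds, or by evaluating the resulting complete sums as Gauss sums.

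The hard part, and the reason the conjecture is open, is the passage from such bounds to a lower bound that \emph{grows}. Since $h$ covers only a quarter-period modulo $2k$ and $j$ is confined to odd residues, the bilinear sums carry no exact orthogonality; completion plus Weil delivers each one only at the square-root-cancellation scale $O(\sqrt k\,\log k)$, and summing these against the slowly decaying $1/n$ weights produces a total error of order $k$ (or larger) that swamps the very main term one is trying to extract. To obtain $S(k)>Ck$ for every fixed constant $C$, in particular $C=3$, one must show that the signed fluctuation of $N$ tends to $-\infty$ faster than any multiple of $k$, i.e.\ that $S(k)/k\to\infty$; this is exactly the delicate information about the joint distribution of the residues $hj\bmod k$ that present exponential-sum technology cannot supply with the required sign and uniformity. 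Granting such a lower bound, the finitely many primes below the stated threshold would be cleared by the direct computation already performed up to $10000$.
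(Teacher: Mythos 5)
The statement you were asked to prove is Conjecture \ref{conjecturemain4}, which the paper itself does not prove: it is supported only by the numerical evidence of Section \ref{conjectures} (computations of $S(k)$ for all $k\le 10000$ and for primes $k\le 50000$), and the authors state that even the weaker Conjecture \ref{mainconjecture} appears very difficult. Your submission is likewise not a proof, and to your credit you say so explicitly. The preliminary reductions you carry out are correct and essentially reproduce identities already in the paper: the reflection $h\mapsto k-h$ (using $[(k-h)j/k]=j-[hj/k]-1$ for $k$ prime) yields
\begin{equation*}
S(k)=\sum_{\substack{j=1\\ j\ \mathrm{odd}}}^{k-1}\sum_{h=1}^{k-1}(-1)^{[hj/k]},
\end{equation*}
which is \eqref{S_k_expression}; the observation that $[hj/k]$ is odd exactly when $hj\bmod 2k\ge k$ recovers the counting function $r(j,k)$ of \eqref{r}; and your identity $S(k)=\tfrac12(k-1)^2-2N$ together with the equivalence $N<\tfrac14\bigl((k-1)^2-6k\bigr)$ is a faithful restatement of the conjecture, paralleling the paper's own reformulations in Section \ref{lowerbounds} in terms of fractional parts and the distribution of $hj$ modulo $2k$.

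The genuine gap is that the single essential step is absent: you never establish the required upper bound on $N$, and you acknowledge that completion plus Weil-type estimates, applied to the bilinear sums $\sum_h\sum_{j\ \mathrm{odd}}e^{\pi i nhj/k}$ with weights decaying like $1/n$, produces an error term of order at least $k$ that swamps the deviation one needs to detect, with no control over its sign. This is precisely the obstruction the paper identifies (the lack of knowledge about the distribution of the residues $hj$ modulo $k$, equivalently modulo $2k$), so your analysis explains why the conjecture is open rather than resolving it. Two further points would need repair even if the analytic core were supplied. First, the conjecture requires only $S(k)>3k$, not $S(k)/k\to\infty$; conflating Conjecture \ref{conjecturemain4} with the stronger Conjecture \ref{C1} makes the target unnecessarily hard. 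Second, your closing remark that the finitely many primes below the threshold ``would be cleared by the direct computation already performed up to $10000$'' presumes that whatever effective threshold emerges from an asymptotic argument lies within the computed range ($k\le 50000$ for primes); nothing guarantees this, so the finite verification cannot be taken for granted. In short: correct and honest setup, matching the paper's own reductions, but no proof.
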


    For primes $k \leq 50000$, there are 8 pairs
    $$\{(32603, 126466), (33149, 126068), (34649, 134104), (34913, 137712)\},$$
     $$\{(35573, 137420), (41579, 165026), (44909, 175916), (49139, 189522)\}$$
      that do not satisfy the inequality $S(k)>4k$ when $k$ is a prime. In particular, note that the largest value of $k$ in this set is 49139, which is close to 50000. Therefore, on the basis of our data, at this  point, we cannot make the conjecture $S(k)>4k$ for primes $k$.

As we increase the number of calculated values of $S(k)$, where $k$ is prime, the slope of the best fit straight line increases.     The best fit straight line and the best fit (quadratic) curve (both colored black on the graph) are extremely close together, indicating an almost linear growth, as claimed  in Conjecture \ref{C1} below. The best fit straight line has a slope of approximately 5.7.

On the other hand, if we attempt to find best fit curves involving higher powers of $k$ and logarithms, we do not find any fits that are convincingly more accurate.  Thus, $S(k)$ appears to have a growth rate that is slightly greater than linear.  With some temerity, we then make the following conjecture.

\begin{conjecture}\label{C1} Let $C_1$ and $C_2$ be any fixed positive numbers. Let $\epsilon >0$ be fixed. Then, for all primes $k$ sufficiently large,
$$ C_1 k < S(k) <C_2 k^{1+\epsilon}.$$
\end{conjecture}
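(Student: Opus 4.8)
The plan is to convert the double sum into a weighted family of cotangent sums and then treat the two inequalities separately. Writing $(-1)^{j+1+[hj/k]}=(-1)^{j+1}(-1)^{[hj/k]}$ and expanding the square wave in its Fourier series $(-1)^{[x]}=\frac{4}{\pi}\sum_{m\ \mathrm{odd}}\frac{\sin(\pi m x)}{m}$ (valid since $hj/k\notin\mathbb{Z}$ for $1\le h,j\le k-1$ with $k$ prime), I would interchange the \emph{finite} sum over $(h,j)$ with the limit of the partial sums in $m$; because the $(h,j)$-sum is finite, no interchange-of-limits issue arises. Summing the resulting geometric series over $h$ for fixed $j$ and taking imaginary parts gives, for $k\nmid m$,
$$\sum_{h=1}^{k-1}\sum_{j=1}^{k-1}(-1)^{j+1}\sin\!\Big(\frac{\pi m h j}{k}\Big)=\sum_{\substack{1\le j\le k-1\\ j\ \mathrm{odd}}}\cot\!\Big(\frac{\pi m j}{2k}\Big),$$
so that, handling the sparse terms with $k\mid m$ directly,
$$S(k)=\frac{4}{\pi}\lim_{M\to\infty}\sum_{\substack{1\le m\le M\\ m\ \mathrm{odd}}}\frac1m\sum_{\substack{1\le j\le k-1\\ j\ \mathrm{odd}}}\cot\!\Big(\frac{\pi m j}{2k}\Big).$$
The leading term $m=1$ equals $\frac4\pi\sum_{j\ \mathrm{odd}}\cot(\pi j/(2k))$; since $\cot(\pi j/(2k))\approx 2k/(\pi j)$ for small $j$, every summand is positive and this term alone is of exact order $k\log k$ (heuristically $\sim\frac{4}{\pi^2}k\log k$). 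This already displays the expected slightly-superlinear growth and matches the numerics of Section \ref{conjectures}.

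For the upper bound $S(k)<C_2k^{1+\epsilon}$ I would instead bound $\sum_{h=1}^{k-1}|S(h,k)|$ directly, using the reciprocity law \eqref{reciprocity} iteratively. Applying \eqref{reciprocity} gives $S(h,k)=1-S(k,h)$, and writing $k=qh+r$ shows that $(-1)^{[kj/h]}=((-1)^{q})^{j}(-1)^{[rj/h]}$: when $q$ is even this reduces cleanly to $S(r,h)$, while when $q$ is odd it produces a ``plain'' sum $-\sum_{j}(-1)^{[rj/h]}$, which is why one must track two sum-types along the Euclidean descent (and work modulo $2k$, as in the definition of $T(h,k)$). Each plain sum $\sum_{j=1}^{h-1}(-1)^{[rj/h]}$ is $O(h/r)$, i.e. $O(\text{next partial quotient})$, so iterating yields $|S(h,k)|\ll\sum_i a_i(h/k)$, the sum of partial quotients of $h/k$. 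Invoking the Yao--Knuth estimate $\sum_{h<k}\sum_i a_i(h/k)\ll k(\log k)^2$ then gives $S(k)\ll k(\log k)^2<C_2k^{1+\epsilon}$ for every fixed $\epsilon>0$ and all large $k$. This half is technical but should be within reach, and it also explains why individual terms such as $S(k-1,k)=k-1$ can be of order $k$ without spoiling the bound.

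The lower bound $S(k)>C_1k$ is the crux and, I expect, the main obstacle. By the identity above it amounts to showing that the tail $\sum_{m\ge3}\frac1m A_m$, with $A_m:=\sum_{j\ \mathrm{odd}}\cot(\pi mj/(2k))$, cancels no more than a vanishing proportion of the positive main term $\frac4\pi A_1\asymp k\log k$. The difficulty is that each $A_m$ is itself as large as $k\log k$, so the weighted tail is $o(k\log k)$ only if there is genuine cancellation, governed by the equidistribution of the residues $mj\bmod 2k$ as $m$ and $j$ vary --- exactly the distribution of $hj$ modulo $k$ singled out as the fundamental difficulty in Section \ref{section1}. The obstruction is the near-resonant terms, where $mj$ is close to a multiple of $2k$ and $\cot$ is large; controlling their aggregate contribution simultaneously over all $m$ appears to require power-saving bounds on the relevant incomplete cotangent (Kloosterman-type) sums that are not presently available, which is consistent with even the weaker Conjecture \ref{mainconjecture} being open. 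A realistic intermediate target would be to prove $S(k)/k\to\infty$ for a density-one set of primes, or unconditionally under a suitable equidistribution hypothesis, rather than for every prime.
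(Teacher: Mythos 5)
The statement you are addressing is Conjecture \ref{C1}, and the paper offers no proof of it: it is stated purely as a conjecture supported by numerical evidence, and the authors' own partial results in Section \ref{lowerbounds} (the trivial bounds \eqref{trivial} and Theorem \ref{theorem2}) fall far short of even the positivity asserted in Conjecture \ref{mainconjecture}. So your proposal is not being measured against a proof in the paper, and, as you yourself acknowledge, it is not a proof either. The decisive gap is the lower bound $C_1k<S(k)$. Your cotangent identity is correct: for $m$ odd, $k\nmid m$, the sum $\sum_{h=1}^{k-1}e^{i\pi mjh/k}$ has zero imaginary part for $j$ even and imaginary part $\cot(\pi mj/(2k))$ for $j$ odd, and the $m=1$ term is indeed positive of order $k\log k$. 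But each tail term $A_m$ for $m\ge3$ can be as large as the main term, and you concede that controlling their aggregate requires exactly the equidistribution information about $hj$ modulo $k$ that the authors single out in Section \ref{section1} as the fundamental obstruction. Conceding the crux means the statement is not proved; the conjecture remains open.

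The upper-bound half would, if correct, go beyond anything in the paper, whose only upper bound for $S(k)$ is the trivial $(k-1)^2/2$. But as written it also has gaps. The reciprocity law \eqref{reciprocity} applies only to coprime arguments of opposite parity, and by Proposition \ref{4.7} the only nonvanishing terms $S(h,k)$ for odd prime $k$ have $h$ even, so the Euclidean descent must be carried out with genuine parity bookkeeping that you gesture at but do not perform. More seriously, the bound $O(h/r)$ for the plain sum $\sum_{j=1}^{h-1}(-1)^{[rj/h]}$ is not justified: grouping the $j$ into the $r$ blocks on which $[rj/h]$ is constant gives an alternating sum of block lengths $h/r+O(1)$, which this argument controls only up to $O(h/r+r)$, and the $O(r)$ fluctuation is itself a sum of the same type one is trying to bound, so the recursion does not visibly close with a bound by the sum of partial quotients. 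A cleaner route to $S(k)\ll k\log^2k$ would be to invoke a known expression of the Hardy-type sum $S(h,k)$ as a linear combination of Dedekind sums and then apply the Yao--Knuth average you cite; that would be a worthwhile partial result to write up separately, but it would establish only the second inequality of Conjecture \ref{C1} and leave the conjecture itself open.
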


Although the summands of $S(k)$ are only trivial exponential sums, the second inequality in Conjecture \ref{C1} is perhaps analogous to conjectures in prime number theory for which   ``square-root cancellation'' is surmised.

We provide two graphs below.  The first image depicts the values of $S(k)$ for $k=1$ to $k=10000$. Note that there are several negative values of $S(k)$. In the second graph, for primes $k$, such that $ 1\leq k \leq 50000$, the values of  $S(k)$ are depicted.

\begin{figure}[H]
\includegraphics[width=10cm]{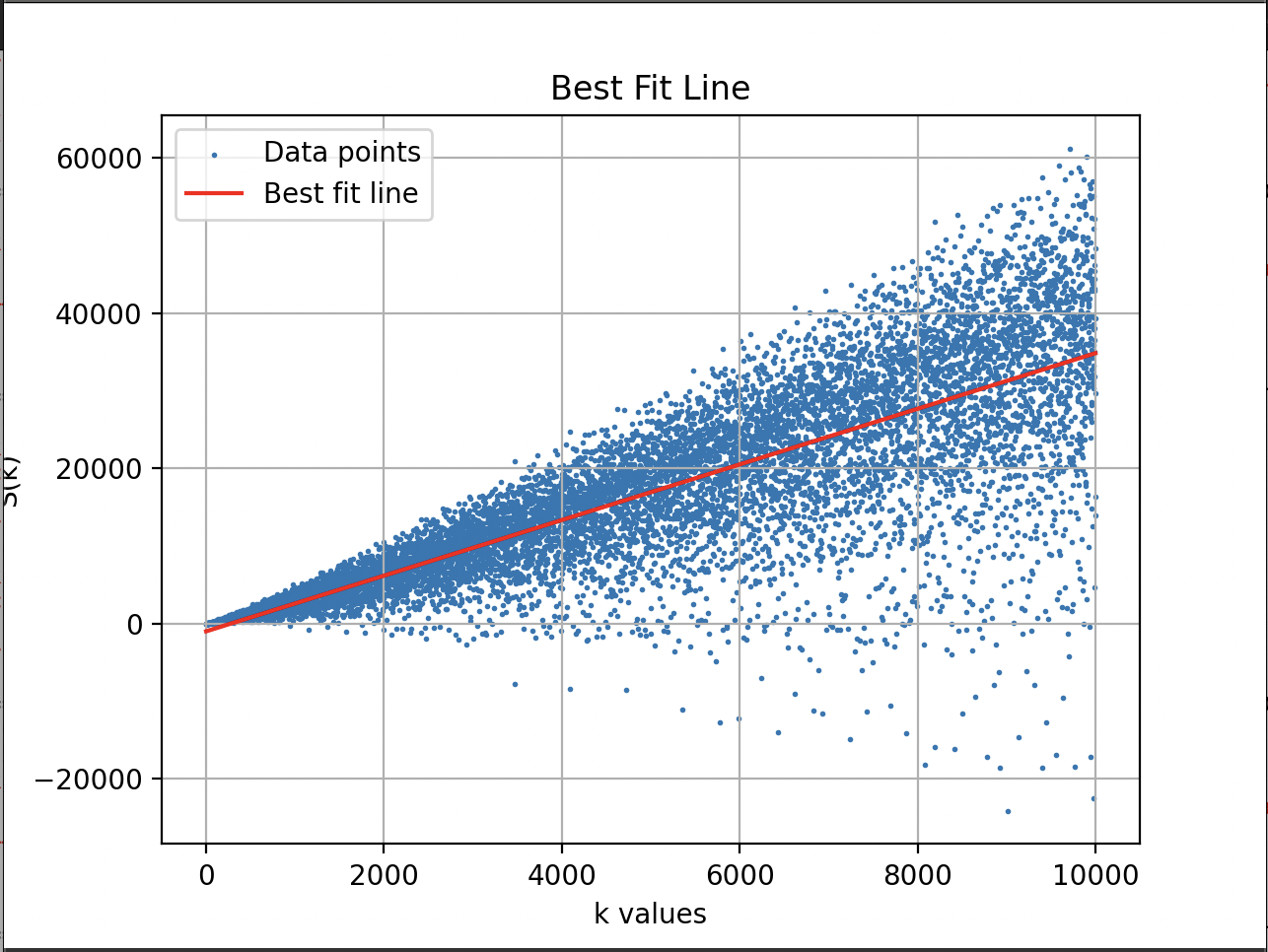}
\end{figure}

\begin{figure}[H]
\includegraphics[width=12cm]{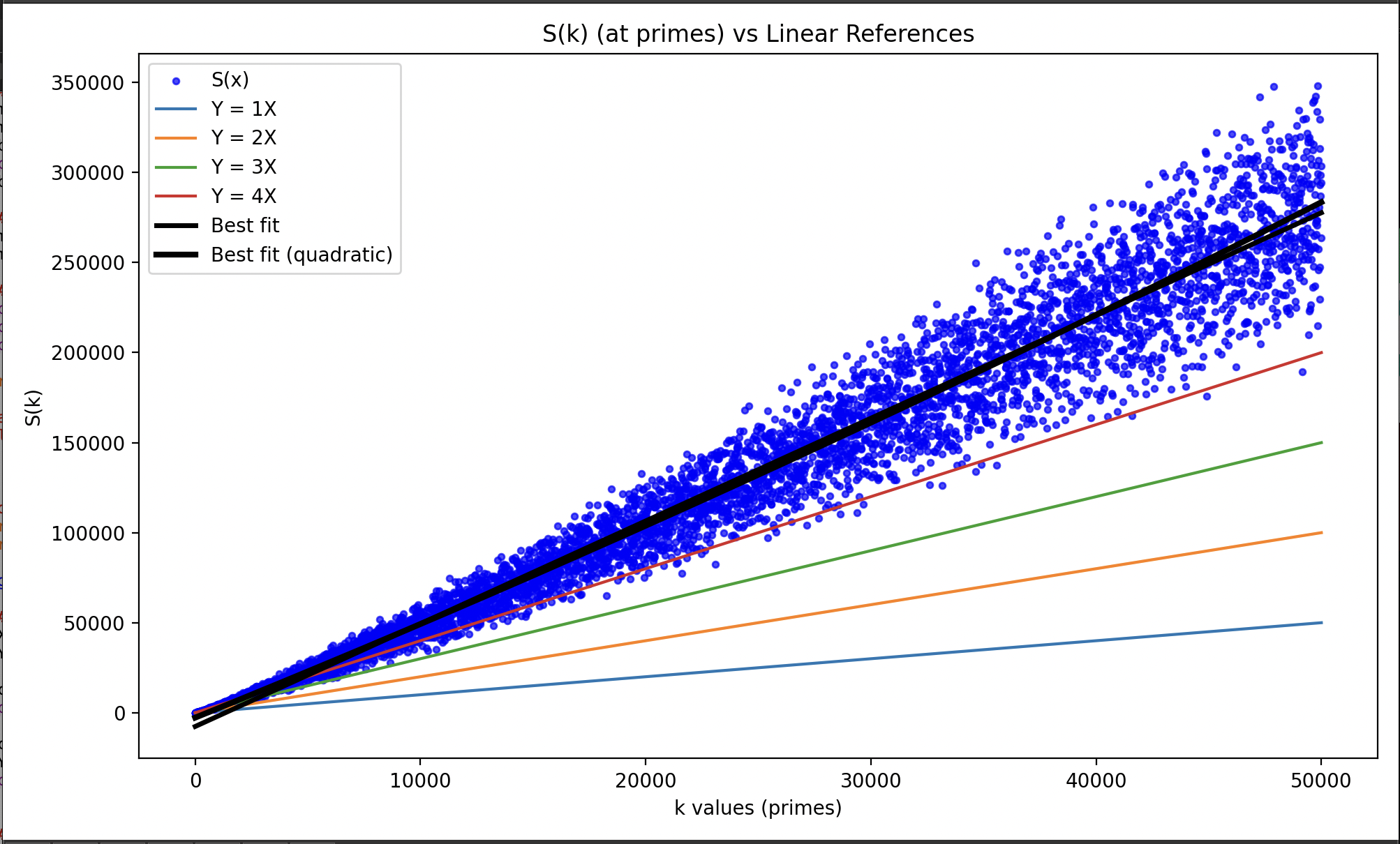}
\end{figure}

We offer some observations about the negative values of $S(k)$.  As seen from our graph, we have calculated $S(k)$ for \emph{all} $k$ up to $10000$.

For $k\leq 10000$, there are  39 negative values of $S(k)$. For some of these,  $k$ is a multiple of 3, but not a multiple of 5, with the first being $S(2079)=-1390$ and the last being $S(9933)=-448$.

For $k\leq 10000$, there are 8 negative values of $S(k)$ when $k$ is a multiple of 5, but not a multiple of 3, with the first being $S(5005)=-1332$ and the last being $S(8855)=-7950$.

For $k\leq 10000$, there are 104 negative values of $S(k)$ when $k$ is a multiple of \emph{both} 5 and 3, with the first being $S(945)=-296$ and the last being $S(9975)=-22450$.

Hence, altogether, there are 151 negative values of $S(k)$ for $k\leq 10000$.  Contrary to what one might expect, for negative values of $S(k)$, there are many more values of $k$ divisible by 15 than there are divisible by 3 or 5 only.

In a written communication, Wolfgang Berndt observed that $k$ need not be a multiple of 3 or 5 for $S(k)$ to be negative.  For example, $S(17017)=S(7\cdot11\cdot13\cdot17)=-2364$, and $S(19019)=S(7\cdot11\cdot13\cdot19)=-20578$.
Furthermore,  $S(k)$ can even be smaller than $-k$. For example, $S(3465)=  -7800$ and   $S(10395)=  -40726$. Thus,  despite the positive bias of $S(k)/k$ to grow stronger for primes $k$,  $S(k)/k$  is perhaps also unlimited on the negative side, especially for ``very composite'' $k$.

\section{Preliminary Results on $T(k)$}\label{odd}

In Section \ref{asymptotic,1},  Theorem \ref{important} gives an asymptotic formula for a sum closely related to $T(k)$, defined in \eqref{defT}.  In this section, we lay the groundwork for Theorem \ref{important} by establishing Lemmas \ref{classify} and \ref{classify2} below. In particular, we can conveniently express $T(k)$ by  a single summation, in contrast to $S(k)$, for which we have representations by only  double sums. Moreover, our formula does not involve sums over powers of $-1$, but instead sums involving the Euler's totient function $\phi(n)$, defined by
$$ \phi(n)= \left\{m: 1\leq m < n; \gcd(m,n)=1\right\}. $$

Throughout this section, $k$ is an odd positive integer greater than 1.

\begin{lemma}\label{classify} If $k$ is an odd positive integer, then
\begin{equation*}
T(k)= 2k - 1 - 2\sum\limits_{d|k} d\cdot\phi\left(\frac{k}{d}\right).\label{classify1}
\end{equation*}
\end{lemma}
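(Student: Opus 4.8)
The plan is to collapse the double sum defining $T(k)$ into a sum of greatest common divisors and then invoke Pillai's arithmetic identity. Writing out the definitions and pulling out the factor depending only on $j$, we have $T(k)=\sum_{j=1}^{2k-1}(-1)^{j+1}\sum_{h=1}^{2k-1}(-1)^{[hj/k]}$. The crucial first observation is that $(-1)^{[hj/k]}$ depends only on the residue of $hj$ modulo $2k$: if $hj\equiv s\pmod{2k}$ with $0\le s<2k$, then $(-1)^{[hj/k]}=+1$ when $0\le s<k$ and $(-1)^{[hj/k]}=-1$ when $k\le s<2k$. I would denote this quantity by $\epsilon(hj)$, so that the inner sum becomes $\sum_{h=1}^{2k-1}\epsilon(hj)$.

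Next I would evaluate this inner sum for each fixed $j$. Since $h\mapsto hj\bmod 2k$ carries a complete residue system modulo $2k$ onto the set of multiples of $g:=\gcd(j,2k)$, each value being attained exactly $g$ times, the sum of $\epsilon(hj)$ over a full period equals $g$ times the difference between the number of multiples of $g$ lying in $[0,k)$ and the number lying in $[k,2k)$. Because $k$ is odd, $g\mid k$ precisely when $j$ is odd; in that case the two counts agree and the full-period sum is $0$, whereas when $j$ is even one checks that the difference of the two counts is exactly $1$, so the full-period sum is $g$. Accounting for the fact that $h$ runs only over $1\le h\le 2k-1$ (so the term $\epsilon(0)=1$ at $h\equiv 0$ is omitted), I obtain $\sum_{h=1}^{2k-1}\epsilon(hj)=-1$ for odd $j$ and $\sum_{h=1}^{2k-1}\epsilon(hj)=\gcd(j,2k)-1$ for even $j$.

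I would then reinsert the factor $(-1)^{j+1}$, which equals $+1$ for the $k$ odd values of $j$ and $-1$ for the $k-1$ even values of $j$ in the range. This gives $T(k)=-k+(k-1)-\sum_{j\ \mathrm{even}}\gcd(j,2k)=-1-\sum_{i=1}^{k-1}\gcd(2i,2k)$. Using $\gcd(2i,2k)=2\gcd(i,k)$ and separating the $i=k$ term, the remaining sum becomes $2\bigl(\sum_{i=1}^{k}\gcd(i,k)-k\bigr)$. Finally I would apply Pillai's identity $\sum_{i=1}^{k}\gcd(i,k)=\sum_{d\mid k}d\,\phi(k/d)$, which follows by grouping the summands according to the value $\gcd(i,k)=d$ (there being $\phi(k/d)$ such $i$ for each divisor $d$). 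Substituting yields $T(k)=2k-1-2\sum_{d\mid k}d\,\phi(k/d)$, as claimed.

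The main obstacle is the counting step inside the inner sum, namely determining how the multiples of $g$ distribute between the two intervals $[0,k)$ and $[k,2k)$ and tracking its dependence on the parity of $j$ through the condition $g\mid k$; once this is handled carefully, the rest of the argument is bookkeeping together with the standard gcd-convolution identity of Pillai.
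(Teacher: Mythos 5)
Your proof is correct, and it takes a genuinely different route from the paper's. The paper works with $h$ fixed and evaluates each $T(h,k)$ individually via the reflection $j\mapsto 2k-j$, which cancels all summands except those with $k\mid hj$; this yields $T(h,k)=1-2\gcd(h,k)$ for odd $h$ and $T(h,k)=1$ for even $h$ (the latter deduced from the relation between $T(h,k)$ and $S(h,k)$ together with the vanishing of $S(h,k)$ for coprime $h,k$ of opposite parity), after which the sum $\sum_{h\ \mathrm{odd}}\gcd(h,k)$ is converted to $\sum_{d\mid k}d\,\phi(k/d)$. You instead fix $j$ and sum over $h$, using the observation that $(-1)^{[hj/k]}$ depends only on $hj\bmod 2k$ together with the equidistribution of $hj$ over the multiples of $\gcd(j,2k)$, and your parity analysis of whether $\gcd(j,2k)$ divides $k$ plays the role of the paper's reflection symmetry; you then finish with the standard Pillai identity $\sum_{i=1}^{k}\gcd(i,k)=\sum_{d\mid k}d\,\phi(k/d)$ rather than the paper's odd-residue variant. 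I checked your two inner-sum evaluations ($-1$ for odd $j$, $\gcd(j,2k)-1$ for even $j$) and the final bookkeeping; both are right. The trade-off: the paper's route produces the per-$h$ formula $T(h,k)=1-2\gcd(h,k)$, which is reused later for the evaluations of $T(p)$ and $T(pq)$, whereas your argument is more self-contained --- in particular it treats all $h$ uniformly and does not need the separate coprimality-based evaluation of $T(h,k)$ for even $h$.
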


We show that Lemma \ref{classify} can be recast using  the following lemma.

\begin{lemma}\label{classify2} For odd $k$,
\begin{equation*}\label{classify3}
\sum\limits_{d|k} d\cdot\phi\left(\frac{k}{d}\right)=\sum\limits_{\substack{1\leq h \leq 2k - 1 \\ h \textup{ odd}}}  \gcd(k,h).
\end{equation*}
\end{lemma}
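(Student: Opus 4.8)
The plan is to show that \emph{both} sides of the claimed identity are equal to the gcd-sum
$P(k) := \sum_{r=1}^{k}\gcd(k,r)$ (Pillai's arithmetical function), after which the lemma follows from the classical evaluation $P(k)=\sum_{d\mid k} d\,\phi(k/d)$. Working through this common value keeps the argument symmetric and isolates the one place where the oddness of $k$ is actually used.

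First I would handle the left-hand side, i.e. establish $\sum_{d\mid k} d\,\phi(k/d) = P(k)$. This is a routine grouping argument: partition $\{1,\dots,k\}$ according to the value $\gcd(k,r)=d$, where $d$ ranges over the divisors of $k$. Writing $r=dm$, the condition $\gcd(k,r)=d$ is equivalent to $\gcd(k/d,m)=1$ with $1\le m\le k/d$, so there are exactly $\phi(k/d)$ integers $r\in\{1,\dots,k\}$ with $\gcd(k,r)=d$ (here $\phi$ is the usual Euler totient, so that the divisor $d=k$ contributes the single term $r=k$ with $\phi(1)=1$). Each such $r$ contributes $d$ to $P(k)$, and summing over $d\mid k$ gives $P(k)=\sum_{d\mid k} d\,\phi(k/d)$.

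Next I would treat the right-hand side. The odd integers in $[1,2k-1]$ are precisely the $k$ numbers $2i-1$ for $1\le i\le k$. Since $k$ is odd, $2$ is invertible modulo $k$, so the map $i\mapsto 2i-1 \pmod{k}$ is a bijection of $\mathbb{Z}/k\mathbb{Z}$: indeed $2i-1\equiv 2i'-1\pmod k$ forces $2i\equiv 2i'\pmod k$, hence $i\equiv i'\pmod k$ because $\gcd(2,k)=1$. Therefore, as $i$ runs over the complete residue system $1,\dots,k$, the values $2i-1$ run over a complete residue system modulo $k$. Since $\gcd(k,h)$ depends only on $h\bmod k$, we get $\sum_{h\ \mathrm{odd},\,1\le h\le 2k-1}\gcd(k,h)=\sum_{r=0}^{k-1}\gcd(k,r)$, and using $\gcd(k,0)=k=\gcd(k,k)$ this is exactly $P(k)$. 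Comparing with the evaluation of the left-hand side proves the lemma.

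The only genuinely load-bearing step is the assertion that the odd integers $1,3,\dots,2k-1$ represent every residue class modulo $k$ exactly once; this is precisely where the hypothesis that $k$ is odd enters, through $\gcd(2,k)=1$. (For even $k$ the odd numbers hit only the classes coprime to $2$, and the identity fails.) Everything else is standard counting, so I do not expect any serious obstacle. As a sanity check, for $k=3$ the left-hand side is $1\cdot\phi(3)+3\cdot\phi(1)=5$ and the right-hand side is $\gcd(3,1)+\gcd(3,3)+\gcd(3,5)=5$, consistent with $P(3)=1+1+3$ and, via Lemma \ref{classify}, with $T(3)=-5$.
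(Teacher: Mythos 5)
Your proof is correct. It takes a mildly but genuinely different route from the paper's. The paper works directly on the right-hand side: it partitions the odd $h\in[1,2k-1]$ according to the value $d=\gcd(h,k)$, writes $h=dm$, and then must count the odd $m$ with $\gcd(m,k/d)=1$ in the dilated range $1\le m\le\left[\frac{2k-1}{d}\right]=\frac{2k}{d}-1$, asserting that this count is $\phi(k/d)$. You instead factor the argument through Pillai's gcd-sum $P(k)=\sum_{r=1}^{k}\gcd(k,r)$: the left-hand side equals $P(k)$ by the textbook grouping over the standard range $[1,k]$, and the right-hand side equals $P(k)$ because the odd integers $1,3,\dots,2k-1$ form a complete residue system modulo $k$ when $k$ is odd and $\gcd(k,\cdot)$ depends only on the residue class. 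The two proofs rest on the same underlying fact (for odd $n$, the odd numbers in $[1,2n-1]$ represent each class mod $n$ exactly once --- the paper needs this with $n=k/d$ for every divisor $d$, you need it once with $n=k$), but your organization is cleaner: it invokes the residue-system observation a single time, isolates exactly where the hypothesis that $k$ is odd enters, and replaces the paper's somewhat terse counting step (whose stated justification, ``since $d\nmid j$ for $k+1\le j\le 2k-1$,'' is not really the point) with a standard identity for $P(k)$. The cost is the introduction of the auxiliary quantity $P(k)$, which the paper avoids.
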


\begin{proof}
\allowdisplaybreaks
We have
\begin{align}\label{classify4}
\sum\limits_{\substack{1\leq h \leq 2k - 1 \\ h \text{ odd}}}  \gcd(h,k) &= \sum\limits_{d|k}d\cdot \# \{h:\gcd(h,k) = d, 1\leq h \leq 2k-1, h \text{ odd }\}\notag\\
&=\sum\limits_{d|k} d\cdot\# \left\{h:\gcd\left(\frac{h}{d},\frac{k}{d}\right) = 1, 1\leq h \leq 2k-1, h \text{ odd }\right\}\notag\\
&=\sum\limits_{d|k} d\cdot \# \left\{m:\gcd\left(m,\frac{k}{d}\right) = 1, 1\leq m\leq \left[\dfrac{2k-1}{d}\right], m \text{ odd }\right\}\notag\\
&=\sum\limits_{d|k} d\cdot\phi\left(\frac{k}{d}\right),
\end{align}
since $d\nmid j$, for $k+1\leq j\leq 2k-1.$
\end{proof}

We prove Lemma \ref{classify} with the help of a few useful lemmas.

\begin{lemma}\label{g2} For $h$ and $k$ both odd,
$$T(h,k) = \sum\limits_{\substack{1\leq j \leq 2k - 1 \\ hj/k \in \mathbb{Z}}} (-1).$$
\end{lemma}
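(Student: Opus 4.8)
The plan is to reduce the alternating exponent to a condition depending only on the residue of $hj$ modulo $k$, and then to exploit a reflection symmetry to cancel every contribution except those coming from $j$ with $k\mid hj$.

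First I would write, for each $j$ with $1\le j\le 2k-1$, the division $hj = k\,[hj/k] + r_j$, where $r_j := hj \bmod k$ satisfies $0\le r_j \le k-1$. Since both $h$ and $k$ are odd, reducing this identity modulo $2$ gives $j \equiv [hj/k] + r_j \pmod 2$, whence $j+1+[hj/k] \equiv r_j+1 \pmod 2$. Therefore each summand of $T(h,k)$, as defined in \eqref{1.1T}, equals $(-1)^{r_j+1} = -(-1)^{r_j}$, so that
$$T(h,k) = -\sum_{j=1}^{2k-1}(-1)^{r_j}.$$
A term with $r_j = 0$ (equivalently $hj/k\in\mathbb{Z}$) contributes $-1$, which is precisely the contribution recorded in the claimed sum; it thus remains to show that the terms with $r_j\neq 0$ cancel.

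For this cancellation I would use the involution $j\mapsto 2k-j$ on $\{1,\dots,2k-1\}$. Since $h(2k-j)\equiv -hj \pmod k$, we have $r_{2k-j} = k - r_j$ whenever $r_j\neq 0$, and $r_{2k-j}=0$ when $r_j = 0$. Because $k$ is odd, $k-r_j$ has the opposite parity to $r_j$, so $(-1)^{r_{2k-j}} = -(-1)^{r_j}$. Hence for every $j\neq k$ with $r_j\neq 0$ the pair $\{j, 2k-j\}$ contributes $(-1)^{r_j} + (-1)^{r_{2k-j}} = 0$ to the sum above. The unique fixed point of the involution is $j=k$, for which $r_k = 0$, so it lies in the exceptional class rather than among the cancelling pairs. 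Collecting terms leaves $T(h,k) = -\#\{j : 1\le j\le 2k-1,\ k\mid hj\}$, which is exactly $\sum_{1\le j\le 2k-1,\ hj/k\in\mathbb{Z}}(-1)$.

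The only delicate point is the parity bookkeeping: the sign flip $(-1)^{r_{2k-j}} = -(-1)^{r_j}$ rests essentially on $k$ being odd, and one must confirm that the fixed point $j=k$ falls into the $r_j=0$ class rather than being double-counted. Both checks are immediate, so I do not anticipate any substantial obstacle.
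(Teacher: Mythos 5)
Your proof is correct and follows essentially the same route as the paper's: both rest on the involution $j\mapsto 2k-j$ to cancel the terms with $k\nmid hj$ and on the oddness of $h$ and $k$ to show each surviving term equals $-1$. Your reformulation of the summand as $-(-1)^{r_j}$ with $r_j = hj \bmod k$ merely packages the paper's two parity computations into one, so the arguments agree in substance.
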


\begin{proof}
For brevity, set
$$ T(h,j,k):=(-1)^{j+1+[hj/k]}.$$
By elementary considerations,
\begin{equation*}
T(h,2k-j,k)=\begin{cases} -T(h,j,k),\quad &\text{if } \frac{hj}{k} \notin \mathbb{Z},\\
T(h,j,k),\quad &\text{if } \frac{hj}{k} \in \mathbb{Z}.
\end{cases}
\end{equation*}
Therefore, we conclude that
$$T(h,k)=\sum\limits_{1\leq j \leq 2k-1}T(h,j,k) = \sum\limits_{\substack{1\leq j \leq 2k - 1 \\ hj/k\in \mathbb{Z}}}(-1)^{j+1+hj/k}.$$
Since the parity of $\frac{hj}{k}$ matches the parity of $j$ when $\frac{hj}{k}\in\mathbb{Z}$, and since $h$ and $k$ are both odd, we see that
$(-1)^{j+1+hj/k} = -1.$
Thus,
$$T(h,k) = \sum\limits_{\substack{1\leq j \leq 2k - 1 \\ hj/k \in \mathbb{Z}}} (-1).$$
\end{proof}

\begin{lemma}\label{g3} For $h$ and $k$ both odd,
$$T(h,k) = 1 - 2\cdot \gcd(h,k). $$
\end{lemma}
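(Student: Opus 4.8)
The plan is to start from Lemma \ref{g2}, which already reduces $T(h,k)$ to $-1$ times the number of integers $j$ with $1 \leq j \leq 2k-1$ for which $hj/k \in \mathbb{Z}$. So the entire task becomes a counting problem: I must count exactly how many $j$ in the range $[1, 2k-1]$ satisfy $k \mid hj$, and then show that this count equals $2\cdot\gcd(h,k) - 1$, so that $T(h,k) = -(2\gcd(h,k)-1) = 1 - 2\gcd(h,k)$.

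First I would write $g := \gcd(h,k)$ and factor out the gcd by setting $h = g h'$ and $k = g k'$ with $\gcd(h',k') = 1$. Then $k \mid hj$ is equivalent to $gk' \mid gh'j$, i.e. $k' \mid h' j$, and since $\gcd(h',k')=1$ this is equivalent to $k' \mid j$. So the condition $hj/k \in \mathbb{Z}$ is exactly the condition that $j$ be a multiple of $k' = k/g$. The counting step is then elementary: I need the number of multiples of $k/g$ lying in $\{1, 2, \dots, 2k-1\}$. Since $2k - 1 = 2g\cdot(k/g) - 1$, the multiples of $k/g$ in this range are $k/g, \, 2(k/g), \, \dots, \, (2g-1)(k/g)$, because $2g\cdot(k/g) = 2k$ exceeds $2k-1$. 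That gives exactly $2g - 1$ values of $j$.

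Combining, $T(h,k)$ equals $-1$ summed over these $2g-1$ values, so $T(h,k) = -(2g-1) = 1 - 2g = 1 - 2\gcd(h,k)$, which is the claim. I would present this cleanly by stating the equivalence $hj/k \in \mathbb{Z} \iff (k/g) \mid j$ and then just counting the multiples in the stated range.

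I do not anticipate a serious obstacle here; the work is genuinely routine once Lemma \ref{g2} is in hand. The only point demanding a moment of care is the boundary of the summation range: one must verify that the largest relevant multiple $(2g-1)(k/g) = 2k - k/g$ does indeed lie in $[1, 2k-1]$ (it does, since $k/g \geq 1$) while the next multiple $2k$ does not. This confirms the count is exactly $2g-1$ and not $2g$, which is precisely what produces the constant $+1$ in the final formula. It is also worth noting that the oddness of $h$ and $k$ is used earlier (in Lemma \ref{g2}, to guarantee each surviving summand equals $-1$) rather than in the counting step itself.
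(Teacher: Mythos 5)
Your proposal is correct and follows essentially the same route as the paper: the authors also reduce to counting, via Lemma \ref{g2}, the $j\in[1,2k-1]$ with $hj/k\in\mathbb{Z}$, characterize these as the multiples $j=nk/d$ with $d=\gcd(h,k)$ and $1\le n<2d$, and obtain $T(h,k)=-(2d-1)$. Your explicit factorization $h=gh'$, $k=gk'$ is just a slightly more spelled-out justification of the same divisibility equivalence, and your boundary check matches theirs.
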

\begin{proof}
Let $d=\gcd(h,k)$. The least common multiple of $h$ and $k$ can be expressed as
$${\rm lcm}(h,k)=\frac{h k}{d}.$$
Thus, for some $n \in \mathbb{N}$,
$$\frac{hj}{k}\in\mathbb{Z}\quad \Leftrightarrow \quad j=\frac{n k}{d}.$$
Since $1\leq j\leq 2k-1,$ we have $1\leq n< 2d.$ Thus, by Lemma \ref{g2},
$$T(h,k)=\sum\limits_{1\leq n<2d}T\left(h,\frac{nk}{d},k\right) = \sum\limits_{1\leq n<2d} (-1) = -(2d-1) = 1- 2\cdot \gcd(k,h).$$
\end{proof}

\begin{proof}[Proof of Lemma \ref{classify}.] From Corollary \ref{g1} (to be proved in Section \ref{elementary}) and Lemma \ref{g3},
\begin{align*}
T(k) =&\sum\limits_{\substack{1\leq h \leq 2k - 1 \\ h \text{ even}}}  1 + \sum\limits_{\substack{1\leq h \leq 2k - 1 \\ h \text{ odd}}}(1 -2\cdot \gcd(h,k))\\
=&\,\, 2k-1 - 2\sum\limits_{\substack{1\leq h \leq 2k - 1 \\ h \text{ odd}}}  \gcd(h,k).
\end{align*}
Lemma \ref{classify} now follows from Lemma \ref{classify2}.
\end{proof}

\section{Asymptotic Formula for Partial Sums Related to $T(k)$}\label{asymptotic,1}
Recall from Lemma \ref{classify} that
\begin{equation}
T(k)= 2k - 1 - 2\sum\limits_{d|k} d\cdot\phi\left(\frac{k}{d}\right).\label{classify77}
\end{equation}
Motivated by \eqref{classify77}, we derive an asymptotic formula for the sum on the right-hand side above.
Set
\begin{equation}\label{f4}
a_n := \begin{cases}
\displaystyle{\sum_{d|n}d\cdot\phi\left(\frac{n}{d}\right)}, \quad &\text{ if }  n  \text{ is odd},\\
0, &\text{ if } n \text{ is even}.
\end{cases}
\end{equation}
We define a generating function for $a_n$ by
\begin{equation}\label{f1}
F(s) := \sum_{n=1}^{\infty} \frac{a_n}{n^s}, \qquad \Re( s )>2.
\end{equation}
Next, define
\begin{equation}\label{f2}
G(s) := \sum_{n = 1}^{\infty}\frac{\sum_{d|n}d\cdot \phi\left(\frac{n}{d}\right)}{n^s} = \frac{\zeta(s-1)^2}{\zeta(s)} = \prod_{p \text{ prime}}\frac{1-\frac{1}{p^s}}{\left(1-\frac{1}{p^{s-1}}\right)^2},
\end{equation}
where the second identity can be established by multiplying the quotient of zeta functions on the right-hand side.  (See also \cite[p.~6, Equation (1.2.12)]{zeta}.)
It follows from \eqref{f4}--\eqref{f2} that
\begin{equation}\label{f3}
F(s) =  \prod_{p \text{ odd prime}}\frac{1-\frac{1}{p^s}}{\left(1-\frac{1}{p^{s-1}}\right)^2} = \frac{\left(1-\frac{1}{2^{s-1}}\right)^2}{1-\frac{1}{2^s}}\cdot G(s) = \frac{\left(1-\frac{1}{2^{s-1}}\right)^2}{1-\frac{1}{2^s}}\cdot \frac{\zeta(s-1)^2}{\zeta(s)}.
\end{equation}


We come to the most important result in this section.

\begin{theorem}\label{important}
   For each $\epsilon$ such that $0.134<\epsilon<\frac{1}{2}$, we have
    \begin{equation*}\label{f6}
        {\sum_{n\leq x}}^{\prime}a_n  =\dfrac{x^2}{\pi^2}\log x+\df{1}{6\pi^2}\left(-36\zeta^{\prime}(2)+A\right)x^2+O_{\epsilon}\left(x^{\frac{3}{2}+\epsilon}\log^2 x\right),
        \end{equation*}
        where
        \begin{equation*}
        A=12\gamma-3+10\log 2,
        \end{equation*}
        $\gamma$ denotes Euler's constant,
    and the prime $'$ on the summation sign indicates that if $x$ is an integer, then we count only $\frac{1}{2}a_x$.
\end{theorem}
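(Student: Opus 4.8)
The plan is to read off the main terms from the double pole of $F(s)$ at $s=2$ via Perron's formula and a contour shift, and then to bound the remaining integral using mean-value estimates for $\zeta$. Write $s=\sigma+it$ and recall from \eqref{f3} that $F(s)=\frac{(1-2^{1-s})^2}{1-2^{-s}}\cdot\frac{\zeta(s-1)^2}{\zeta(s)}$. Since $a_n=\sum_{def=n}de\,\mu(f)$ restricted to odd $n$, one has $a_n\le(\mathrm{id}*\mathrm{id})(n)=n\tau(n)\ll n^{1+\epsilon}$, so the series for $F$ converges absolutely for $\sigma>2$. Applying the truncated Perron formula with $c=2+1/\log x$ and a height $T$ to be chosen, I would write
\[
{\sum_{n\le x}}^{\prime}a_n=\frac{1}{2\pi i}\int_{c-iT}^{c+iT}F(s)\frac{x^s}{s}\,ds+O\!\left(\frac{x^2\log^2 x}{T}+x^{1+\epsilon}\right),
\]
where the $\log^2 x$ in the error reflects the size $\zeta(c-1)^2\sim\log^2 x$ of the absolutely convergent series near $c$.

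Next I would move the line of integration to $\sigma=\tfrac32$, the line on which $s-1$ lies on the critical line of $\zeta$. Between the two vertical lines the integrand is holomorphic apart from the double pole at $s=2$ coming from $\zeta(s-1)^2$, since $1/\zeta(s)$ is holomorphic and the $2$-factor is holomorphic and nonvanishing for $\sigma>1$. Writing $F(s)x^s/s=\zeta(s-1)^2\phi(s)$ with
\[
\phi(s):=\frac{(1-2^{1-s})^2}{(1-2^{-s})\,\zeta(s)}\cdot\frac{x^s}{s}
\]
holomorphic at $s=2$, and using $\zeta(s-1)=(s-2)^{-1}+\gamma+O(s-2)$, the residue equals $2\gamma\,\phi(2)+\phi'(2)$. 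Here $\phi(2)=x^2/\pi^2$, and the logarithmic derivative gives $\phi'(2)/\phi(2)=\log x-\tfrac12+\tfrac53\log2-\zeta'(2)/\zeta(2)$; inserting $\zeta(2)=\pi^2/6$ yields the $\frac{x^2}{\pi^2}\log x$ term together with an $x^2$ term assembled from $\gamma$, $\log2$ and $\zeta'(2)/\zeta(2)$, which is the stated main term with $A=12\gamma-3+10\log2$.

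It then remains to bound the shifted contour. On the vertical line $\sigma=\tfrac32$ the factor $1/\zeta(s)$ and the $2$-factor are $O(1)$, so its contribution is
\[
\ll x^{3/2}\int_1^T\frac{|\zeta(\tfrac12+it)|^2}{t}\,dt\ll x^{3/2}\log^2 T,
\]
by the second-moment estimate $\int_0^T|\zeta(\tfrac12+it)|^2\,dt\ll T\log T$ and partial summation; this is the source of the $\log^2 x$. On the horizontal segments $\Im s=\pm T$, $\sigma\in[\tfrac32,c]$, I would apply the convexity bound $\zeta(\tfrac12+\tau+iT)\ll_\epsilon T^{(1-2\tau)/4+\epsilon}$ for $0\le\tau\le\tfrac12$ together with $1/|\zeta(\sigma+iT)|\ll1$, bounding the integrand by a multiple of $x^\sigma T^{1-\sigma+\epsilon}$ and integrating in $\sigma$. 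Choosing $T$ to be a suitable power of $x$ so as to balance this horizontal contribution against the Perron error $x^2\log^2 x/T$ delivers the stated error $O_\epsilon(x^{3/2+\epsilon}\log^2 x)$; the lower threshold $0.134$ on $\epsilon$ reflects the strength of the pointwise bounds used in this optimization, while the upper constraint $\epsilon<\tfrac12$ is the threshold beyond which the error would swamp the secondary main term of size $x^2$.

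The main obstacle is precisely this uniform control of $\zeta(s-1)^2/\zeta(s)$ along the shifted contour: one must marry a mean-square bound on the vertical line (which forces the contour onto the critical line of $\zeta(s-1)$ and produces the $\log^2 x$) with pointwise convexity or subconvexity bounds on the horizontal segments, and then optimize $T$. Driving the exponent down to $\tfrac32$ hinges on the mean-value input, and any sharpening of the subconvexity bound for $\zeta(\tfrac12+it)$ used on the horizontal segments feeds directly into lowering the admissible range of $\epsilon$. By comparison, the residue computation at the double pole, though requiring a careful Laurent expansion, is otherwise routine.
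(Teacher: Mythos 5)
Your proposal is correct and follows the same skeleton as the paper's proof: Perron's formula, a rectangular contour shift past the double pole at $s=2$, the residue computation (your Laurent expansion $2\gamma\,\phi(2)+\phi'(2)$ with $\phi(2)=x^2/\pi^2$ and $\phi'(2)/\phi(2)=\log x-\tfrac12+\tfrac53\log 2-\zeta'(2)/\zeta(2)$ reproduces exactly the paper's $R_2$), a second-moment estimate for $\zeta$ on the left vertical line, and pointwise bounds on the horizontal segments. The differences are all in the error analysis, and they are substantive enough to note. The paper takes $c=3$, shifts only to $\sigma_0=\tfrac32+\epsilon$, and on the horizontal segments uses the weak bound $\zeta(\sigma+it)\ll t^{1-\sigma}$ from Titchmarsh (5.1.4); balancing $2-a(2\sigma_0-3)$ against $c-a$ with $T=x^a$ is what forces $a=\tfrac{1}{1-2\epsilon}$ and produces the curious threshold $\epsilon>1-\tfrac{\sqrt3}{2}\approx 0.134$. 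You instead take $c=2+1/\log x$, shift all the way to $\sigma=\tfrac32$, and use convexity on the horizontals; carried out, this gives a horizontal contribution $\ll x^2T^{-1+\epsilon}+x^{3/2}T^{-1/2+\epsilon}$ and, with $T$ a modest power of $x$ (say $T=x^{1/2}$), an overall error $O_{\epsilon}\bigl(x^{3/2+\epsilon}\log^2x\bigr)$ valid for \emph{every} $\epsilon\in(0,\tfrac12)$ --- a strictly stronger statement that of course implies the theorem. Consequently your closing explanation of where $0.134$ comes from does not match your own setup: in your argument no such threshold arises; it is an artifact of the paper's weaker horizontal bound. Two small points to tidy if you write this up: justify $1/|\zeta(s)|=O(1)$ on $\Re s\ge\tfrac32$ (e.g.\ via $|1/\zeta(s)|\le\zeta(\sigma)/\zeta(2\sigma)$), and, in the truncated-Perron error, handle the terms with $n$ near $x$ as the paper does by taking $x$ to be a half-odd-integer so that $|\log(x/n)|\gg 1/x$; your stated bound $O(x^2\log^2x/T+x^{1+\epsilon})$ is then legitimate.
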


\begin{proof} By Perron's summation formula \cite[pp.~12--14]{hardy-riesz},
\begin{equation}\label{perron1}
{\sum_{n\leq x}}^{\prime}a_n = \frac{1}{2\pi i}\int_{c-i\infty}^{c+i\infty}F(s)\frac{x^s}{s} ds = \frac{1}{2\pi i}\int_{c-iT}^{c+iT}F(s)\frac{x^s}{s} ds + E(T),
\end{equation}
where $F(s)$ is defined in \eqref{f3} and
\begin{equation}\label{E}
E(T)=\frac{1}{2\pi i}\left(\int_{c-i\infty}^{c-iT}+\int_{c+iT}^{c+i\infty}\right)F(s)\frac{x^s}{s} ds.
\end{equation}

 Let $c>2$ and $\frac32\leq \sigma_0 <c$, where $c$ and $\sigma_0$ will be determined later.  Consider
\begin{equation*}
    I_T:= \frac{1}{2\pi i}\int_{C_T}F(s)\frac{x^s}{s} ds=
    \frac{1}{2\pi i}\int_{C_T}\frac{\left(1-\frac{1}{2^{s-1}}\right)^2}{1-\frac{1}{2^s}}\cdot \frac{\zeta(s-1)^2}{\zeta(s)}\frac{x^s}{s} ds,\label{integral}
\end{equation*}
where $C_T$ is the positively oriented rectangle with vertices $\sigma_0 \pm iT$ and $c\pm iT$.
On the interior of $C_T$, the integrand is analytic except for a double pole at $s=2$, arising from $\zeta^2(s-1)$.  Let $R_a\bigl(f(s)\bigr)$  denote the residue of a function $f(s)$ at $s=a$.   Therefore, by the residue theorem,
\begin{align}\label{residuetheorem}
I_T=R_2:=&R_2\left(\frac{\left(1-\frac{1}{2^{s-1}}\right)^2}{1-\frac{1}{2^s}}\cdot \frac{\zeta(s-1)^2}{\zeta(s)}\frac{x^s}{s}\right)\notag\\
    =& \frac{-36x^2\zeta'(2)+12\gamma \pi^2x^2-3\pi^2x^2+10\pi^2x^2\mathrm{log}(2)+6\pi^2x^2\mathrm{log}(x)}{6\pi^4},
\end{align}
where the residue can be calculated either by hand or by \emph{Mathematica}.

We next calculate $I_T$ by a second method.  Let $I_1,I_2,I_3,I_4$ denote, respectively, the integrals over
\begin{align*}
    [c-iT,c+iT],\quad [c+iT,\sigma_0 +iT],\quad
[\sigma_0+iT,\sigma_0-iT],\quad
[\sigma_0-iT,c-iT].
\end{align*}

We first find a suitable bound for $I_2$.  Accordingly,
\begin{align}
    I_2&=-\frac{1}{2\pi i}\int_{\sigma_0+iT}^{c+iT}\frac{\left(1-\frac{1}{2^{s-1}}\right)^2}{1-\frac{1}{2^s}}\cdot \frac{\zeta(s-1)^2}{\zeta(s)}\frac{x^s}{s} ds\notag\\&
    \ll \frac{1}{T}\int_{\sigma_0}^cx^u|\zeta(u-1+iT)|^2\,du.\label{I_3}
\end{align}
From \cite[p.~96, Equation (5.1.4)]{zeta}, we
have
\[|\zeta(u-1+iT)|\ll T^{1-(\sigma_0-1)},\]
when $\sigma_0\leq u<2$, and
\[|\zeta(u-1+iT)|\ll \log T, \]
 when $2\leq u\leq 3$.
 Using these bounds in \eqref{I_3}, we obtain
\begin{equation}
    I_2\ll  \frac{1}{T^{2\sigma_0-3}} \int_{\sigma_0}^2 x^u \,du+ \frac{\log^2 T}{T}\int_{2}^c x^u \,du\ll  \frac{x^2}{T^{2\sigma_0-3}\log x}+\frac{\log^2 T\,\, x^c}{T\log x}
    \label{I33}.
\end{equation}
By the same argument, this bound holds for $I_4$ as well.

We now turn to bounding $I_3$.  To that end,
 \begin{align}
    I_3&=\frac{1}{2\pi i} \int_{\sigma_0-iT} ^{\sigma_0+iT} \frac{\left(1-\frac{1}{2^{s-1}}\right)^2}{1-\frac{1}{2^s}}\cdot \frac{\zeta(s-1)^2}{\zeta(s)}\frac{x^s}{s}\, ds \nonumber\\
    &\ll x^{\sigma_0}\int_{-T}^{T} \frac{|\zeta(\sigma_0-1+it)|^2}{|\sigma_0+it|}\,dt\nonumber\\
   & \ll x^{\sigma_0}+x^{\sigma_0}\int_1^T\frac{|\zeta(\sigma_0-1+it)|^2}{t}\,dt. \label{I_2}
\end{align}
Using a dyadic decomposition, we find, from \eqref{I_2}, that
\begin{equation}\label{I3}
   I_3 \ll x^{\sigma_0}+x^{\sigma_0}  \sum_{v=0}^{[ \log_2 T]}\frac{1}{2^v} \int _{2^v} ^{2^{v+1} } |\zeta(\sigma_0-1 +it)|^2\, dt.
\end{equation}
Employing  \cite[p.~118, Theorem 7.2(A)]{zeta}, we find that 
\begin{equation*}
    \int_{2^v} ^{2^{v+1}} |\zeta(\sigma_0-1 +it)|^2\,dt \ll v2^v.
\end{equation*}
Using the bound above in \eqref{I3}, we arrive at 
\begin{align}
    I_3&\ll x^{\sigma_0}+x^{\sigma_0} \sum_{v=0} ^{[ \log_2 T ]} v \ll  x^{\sigma_0}\log^2 T. \label{I_2_estimate}
\end{align}

In summary, by  \eqref{I33} and \eqref{I_2_estimate}, we deduce that
\begin{equation}\label{III}
 I_2+I_3+I_4 \ll   \frac{x^2}{T^{2\sigma_0-3}\log x}+\frac{\log^2 T\,\, x^c}{T\log x}+ x^{\sigma_0}\log^2 T.
 \end{equation}

To optimize the bounds for $I_2, I_3$, and $I_4$, we seek suitable constants such that the maximal exponents of $x$ in the estimates of $I_2$ and $I_4$ are  equal.  We also desire the maximal exponent of $x$ in the estimate of $I_3$ to be very close to the exponents in the estimates of  $I_2$ and $I_4$.

 Let
 \begin{equation}\label{maxmax}
  \sigma_0=\frac{3}{2}+\epsilon,
  \end{equation}
  for $0.134<\epsilon <\frac{1}{2}$.
If we take
\begin{equation}\label{G2}
c= 3 \quad\text{ and } \quad T=x^{a},
\end{equation}
 where $a$ is to be determined,
 the maximal exponent of $x$ in the estimate of $I_2$ and $I_4$ is
 $$\mathrm{max}\{2-a(2\sigma_0-3), c-a\}.$$
 To make them equal, we take $$a=\frac{1}{1-2\epsilon}.$$
 Then,
  \begin{equation}\label{G2_1}
  \mathrm{max}\{2-a(2\sigma_0-3), c-a\}=3-\frac{1}{1-2\epsilon}.
  \end{equation}

  Compare the exponent above with the exponent from the bound of $I_3$, i.e., $\frac{3}{2}+\epsilon$, by \eqref{maxmax}. For $\epsilon>0.134$,  we find that
$$3-\frac{1}{1-2\epsilon} < \frac{3}{2}+\epsilon.$$
   Putting these together in \eqref{III}, we conclude that
   \begin{equation}\label{IIIa}
  I_2+I_3+I_4 =  O_\epsilon(x^{\frac{3}{2}+{\epsilon}}\log^2x).
  \end{equation}
  Combining \eqref{IIIa} with \eqref{residuetheorem}, we further conclude that
  \begin{equation}\label{IV}
  I_1=I_T-I_2-I_3-I_4 = R_2+ O_\epsilon(x^{\frac{3}{2}+{\epsilon}}\log^2x).
  \end{equation}

  Returning to \eqref{perron1}, we see that it remains to bound $E(T)$, which is given by \eqref{E}.  With $d(n)$ denoting the number of positive divisors of $n$ below, we find that
\begin{align}
        E(T)\leq &\frac{x^c}{T}\sum _{n=1}^\infty \frac{\sum_{d\mid n }d\phi(\frac{n}{d})}{n^c|\log(\frac{x}{n})|}=\frac{x^c}{T}\sum _{n=1}^\infty \frac{\sum_{d\mid n }\frac{n}{d}\phi(d)}{n^c |\log(\frac{x}{n})|}
        =\frac{x^c}{T}\sum _{n=1}^\infty \frac{\sum_{d\mid n }\frac{\phi(d)}{d}}{n^{c-1} |\log(\frac{x}{n})|}\nonumber\\
        \leq&  \frac{x^c}{T}\sum _{n=1}^\infty \frac{d(n)}{n^{c-1} |\log(\frac{x}{n})|}
        \leq 2\frac{x^c}{T} \sum_{n=1 }^\infty \frac{1}{n^{c-3/2}|\log (\frac{x}{n})|},\label{E(T)}
           \end{align}
 where we used the easy bound $d(n)\leq 2n^{1/2}$.

 We separate two cases.  First, let $E_1(T)$ denote those summands on the right side of \eqref{E(T)}  for which  $n \leq \frac{3}{4} x$ or $n\geq \frac{5}{4} x$. For these terms,  $|\log(\frac{x}{n})|$ has a positive lower bound, and hence,
  \begin{equation}\label{E1}
 E_1(T) \ll \frac{x^c}{T}\df{1}{x^{c-5/2}}=\df{x^{5/2}}{T}.
 \end{equation}

  Second, let $E_2(T)$ denote those summands in $E(T)$ for which $\frac{3x}{4}<n<\frac{5x}{4}.$
 Observe that  $\sum_{n\leq y}a_n $
 does not change in each interval, $N < y < N + 1$, for any fixed positive
integer $N$. Therefore, we may choose $x = N_0 + 1/2$, for each positive integer $N_0$. Then for
 $\frac{3}{4}x <n<\frac{5}{4} x$, by the mean value theorem,
\begin{equation*}\label{EE}
\left|\log \left(\frac{x}{n}\right)\right|\geq \frac{|x-n|}{x}\geq \frac{1}{2x}.
\end{equation*}
Therefore, the contribution in this case is
\begin{equation}\label{E2}
E_2(T)\ll \frac{x^{c+1} }{T} \sum_{\frac{3}{4}x <n<\frac{5}{4} x}\frac{1}{n^{c-3/2}}\ll \frac{x^{7/2} }{T}.
\end{equation}
 Combining \eqref{E1} and \eqref{E2} and also using \eqref{III} and \eqref{G2},  we deduce that
 \begin{equation}\label{Efinal}
 E(T)\ll x^{7/2-a}\ll x^{3/2}.
 \end{equation}

 In summary, from \eqref{perron1}, \eqref{IV}, and \eqref{Efinal}, we have shown that
 \begin{align*}
 {\sum_{n\leq x}}^{\prime}a_n=&I_1+E(T)\\
 =&R_2+ O_\epsilon(x^{\frac{3}{2}+{\epsilon}}\log^2x)+O(x^{3/2})\\
 =&R_2+ O_\epsilon(x^{\frac{3}{2}+{\epsilon}}\log^2x),
 \end{align*}
 where $R_2$ is given by \eqref{residuetheorem}.  This completes the proof of Theorem \ref{important}.
  \end{proof}

\section{Lower bounds for  $S(k)$}\label{lowerbounds}

In considering Conjecture \ref{mainconjecture}, we seek lower bounds for  $S(k)$.  We propose several such bounds in this section. Our first bounds are simple.

From Proposition \ref{4.7}, $S(h,k)=0$, when $h$ and $k$ are co-prime positive integers.  Thus, we can write

\begin{align}
	S(k)=\sum_{\substack{j=1\\j\text{ odd}}}^{k-1}\sum_{h=1}^{k-1} (-1)^{j+1+[hj/k]}
	=\sum_{\substack{j=1\\j\text{ odd}}}^{k-1}\sum_{h=1}^{k-1} (-1)^{[hj/k]}
	=\frac{(k-1)^2}{2}-2\sum_{\substack{j=1\\j\text{ odd}}}^{k-1}r(j,k), \label{S_k_expression}
\end{align}
where
\begin{equation}\label{r}
r(j,k)= \#\left\{h: 1\leq h\leq k-1;  \left[\df{hj}{k}\right] \text{ is odd }\right\}.
\end{equation}
Observe that if we assume that each value of $\left[\frac{hj}{k}\right]$ is odd, or that each value  of $\left[\frac{hj}{k}\right]$ is even, then we obtain the trivial bounds
\begin{equation}\label{trivial}
-\df{(k-1)^2}{2}\leq S(k) \leq \df{(k-1)^2}{2}.
\end{equation}
Using the elementary relation,
\begin{equation}
    \label{squarebrackets}
[x]-2\left[\dfrac{x}{2}\right] = \begin{cases} 0, \quad &\text{ if $[x]$ is even},\\
1, &\text{ if $[x]$ is odd},
\end{cases}
\end{equation}
and the elementary evaluation (see \cite[p.~32, Equation (41)]{rg} or \cite[p.~214, Equation (4.6)]{berndt-dieter}),
 \begin{equation}\label{elementary evaluation}
\sum_{j=1}^{n-1} \left[\frac{mj}{n}\right] = \frac{(m-1)(n-1)}{2},
\end{equation}
where $m$ and $n$ are co-prime,
we can write
\begin{align}
    r(j,k)= \sum_{h=1}^{k-1} \left(\left[\df{hj}{k}\right]-2\left[\df{hj}{2k}\right]\right)
    =\frac{(j-1)(k-1)}{2}-2\sum_{h=1}^{k-1}\left[\df{hj}{2k}\right].\label{m_jk_expression}
  \end{align}
  (The sum over $h$  on the right-hand side  does  not have a known closed form.)
If we write $j=2b+1$, then
$$\left[\frac{hb}{k}\right]\leq \left[\frac{hj}{2k}\right]=\left[\frac{h(2b+1)}{2k}\right]\leq \left[\frac{h(b+1)}{k}\right].$$
Using these trivial bounds for the floor function in \eqref{m_jk_expression}, and then using the resulting inequalities in \eqref{S_k_expression}, we obtain the same trivial bounds  \eqref{trivial}.

\begin{theorem}\label{4.9theorem} If $k$ is an odd prime, then
\begin{align*}
S(k)
=&\; -(k-1)^2 + 4 \sum_{\ell=1}^{\frac{k-1}{2}}\sum_{h=1}^{\frac{k-1}{2}}\left(\left\{\frac{2h\ell}{k}\right\} + \left\{\frac{h(2\ell-1)}{k}-\frac{1}{2}\right\}\right).
\end{align*}
\end{theorem}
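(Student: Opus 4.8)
The plan is to transform $S(k)$ into the asserted double sum in three moves: convert the signs $(-1)^{[hj/k]}$ into fractional parts, apply the reflection $h\mapsto k-h$, and finish with a complete-residue-system collapse. I would begin from the middle expression of \eqref{S_k_expression}, namely $S(k)=\sum_{j\text{ odd}}\sum_{h=1}^{k-1}(-1)^{[hj/k]}$. Relation \eqref{squarebrackets} gives $(-1)^{[x]}=1-2\bigl([x]-2[x/2]\bigr)=1-4\{x/2\}+2\{x\}$. Taking $x=hj/k$, using that $k$ is prime so $\gcd(j,k)=1$ and hence $\sum_{h=1}^{k-1}\{hj/k\}=\tfrac{k-1}{2}$ (the map $h\mapsto hj\bmod k$ permutes $1,\dots,k-1$), the constant term and the $\{hj/k\}$-term each contribute $\tfrac{(k-1)^2}{2}$, so that
\[ S(k)=(k-1)^2-4U,\qquad U:=\sum_{j\text{ odd}}\sum_{h=1}^{k-1}\left\{\frac{hj}{2k}\right\}. \]

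The central step is to re-express $U$ via the reflection $h\mapsto k-h$, which, since $k$ is odd, is a bijection interchanging the odd and even $h$ in $\{1,\dots,k-1\}$. For even $h=2a$ one has $\{hj/(2k)\}=\{aj/k\}$; for odd $h$ I put $h=k-2a$ and use that $j$ is odd to get $\{hj/(2k)\}=\{\tfrac12-aj/k\}$. Since $aj/k-\tfrac12\notin\mathbb{Z}$, the relation $\{\tfrac12-w\}=1-\{w-\tfrac12\}$ converts this into $1-\{aj/k-\tfrac12\}$, so after collecting the $\tfrac{(k-1)^2}{4}$ ones,
\[ U=Y+\frac{(k-1)^2}{4}-Q,\quad Y:=\sum_{j\text{ odd}}\sum_{a=1}^{(k-1)/2}\left\{\frac{aj}{k}\right\},\quad Q:=\sum_{j\text{ odd}}\sum_{a=1}^{(k-1)/2}\left\{\frac{aj}{k}-\frac12\right\}. \]
Substituting back collapses the $(k-1)^2$ terms and yields the compact form $S(k)=4Q-4Y$; writing $j=2\ell-1$ shows that $Q$ is already the $\{h(2\ell-1)/k-\tfrac12\}$ double sum appearing in the statement.

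It then remains to prove $Y+P=\tfrac{(k-1)^2}{4}$, where $P:=\sum_{\ell,h=1}^{(k-1)/2}\{2h\ell/k\}$, for then $-4Y=-(k-1)^2+4P$ and $S(k)=4Q-4Y=-(k-1)^2+4P+4Q$, exactly as claimed. This last identity is the cleanest part: writing $Y=\sum_{\ell,h}\{h(2\ell-1)/k\}$ and $P=\sum_{\ell,h}\{h(2\ell)/k\}$ and observing that, as $\ell$ runs over $1,\dots,\tfrac{k-1}{2}$, the multipliers $2\ell-1$ and $2\ell$ together exhaust $1,\dots,k-1$, the inner sum becomes $\sum_{t=1}^{k-1}\{ht/k\}=\tfrac{k-1}{2}$ for each fixed $h$ (again by the permutation property), and summing over $h$ gives $\tfrac{(k-1)^2}{4}$.

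I expect the main obstacle to be conceptual rather than computational: one must recognize that the sign reversal of the constant, from $+(k-1)^2$ to $-(k-1)^2$, is precisely what the reflection $h\mapsto k-h$ together with $\{\tfrac12-w\}=1-\{w-\tfrac12\}$ produce, and that the leftover sum $Y$ pairs with $P$ into a complete residue system modulo $k$. The only routine care needed is to confirm that none of the arguments are integers or half-integers, which is guaranteed by $k$ being prime with $1\le a,h,j\le k-1$, so that every fractional-part manipulation above is legitimate.
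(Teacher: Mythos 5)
Your proof is correct; I verified each step (the conversion $(-1)^{[x]}=1-4\{x/2\}+2\{x\}$, the reflection $h\mapsto k-h$ together with $\{\tfrac12-w\}=1-\{w-\tfrac12\}$, which is legitimate because $2aj/k\notin\mathbb{Z}$ for $k$ prime, and the closing identity $Y+P=\tfrac{(k-1)^2}{4}$), and the resulting formula checks numerically against $S(5)=4$. Your route is genuinely different in organization from the paper's. The paper begins from $S(k)=\sum_{h=1}^{(k-1)/2}S(2h,k)$, introduces the counting function $m(j,k)=\#\{h:\ [(2h+k)j/k]\text{ odd}\}$, carries out the whole computation in greatest-integer functions --- evaluating one double sum in closed form via \eqref{elementary evaluation} --- and passes to fractional parts only at the very end. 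You instead stay with the odd-$j$ form of \eqref{S_k_expression}, convert signs to fractional parts immediately, and let the reflection $h\mapsto k-h$ produce the split into the two families $\left\{2h\ell/k\right\}$ and $\left\{h(2\ell-1)/k-\tfrac12\right\}$; your final collapse of $Y+P$ over a complete residue system is the fractional-part counterpart of the paper's appeal to \eqref{elementary evaluation}. What your version buys is brevity and a transparent accounting of the sign change from $+(k-1)^2$ to $-(k-1)^2$ (it is exactly the $\tfrac{(k-1)^2}{4}$ worth of $1$'s released by $\{\tfrac12-w\}=1-\{w-\tfrac12\}$), plus the clean intermediate identity $S(k)=4Q-4Y$ that is not visible in the paper's computation. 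What the paper's version buys is the explicit combinatorial meaning of $m(j,k)$ as a count of odd floor values, which the authors exploit in the surrounding discussion. One caveat you inherit from the paper: the stated justification of \eqref{S_k_expression} via Proposition \ref{4.7} removes the odd-$h$ terms, whereas the equation actually discards the even-$j$ terms; their vanishing does hold, and in fact follows from the very reflection $h\mapsto k-h$ you use, so your argument becomes fully self-contained once that one line is added.
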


\begin{proof} By Proposition \ref{4.7}, $S(h,k) = 0$ when $h$ and $k$ are both odd; so we can write
\begin{align}\label{4.10}
S(k)=&\; \sum_{h=1}^{\frac{k-1}{2}}S(2h,k) = \sum_{h=1}^{\frac{k-1}{2}}\sum_{j=1}^{k-1}(-1)^{j+1+\left[2hj/k\right]}\notag\\
=&\;-\sum_{j=1}^{k-1}\sum_{h=1}^{\frac{k-1}{2}} (-1)^{j+\left[2hj/k\right]}\notag\\
=&\;-\sum_{j=1}^{k-1}\sum_{h=1}^{\frac{k-1}{2}} (-1)^{\left[(2hj+jk)/k\right]}\notag\\
=&\; 2 \sum_{j=1}^{k-1}m(j,k) - \frac{(k-1)^2}{2},
\end{align}
where
\begin{equation}\label{m}
m(j,k) = \#\biggl\{h: 1\leq h \leq \dfrac{k-1}{2};\left[\dfrac{(2h+k)j}{k}\right]\text{ is odd}\biggr\},
\end{equation}
and where in the last line of \eqref{4.10}, we added and subtracted the number of odd values of $\left[\frac{2hj+jk}{k}\right]$.

 Using \eqref{squarebrackets}, we have the representation,
\begin{align*}
m(j,k)=&\; \sum_{h=1}^{\frac{k-1}{2}}\left(\left[\frac{(2h+k)j}{k}\right] - 2\left[\frac{(2h+k)j}{2k}\right]\right)\notag\\
=&\; \sum_{h=1}^{\frac{k-1}{2}}\left(j+\left[\frac{2hj}{k}\right] - 2\left[\frac{hj}{k}+\frac{j}{2}\right]\right).
\end{align*}
\allowdisplaybreaks
Employing \eqref{elementary evaluation} in the fourth equality below, we find that
\begin{align}
\sum_{j=1}^{k-1} m(j,k)=&\; \sum_{\ell=1}^{\frac{k-1}{2}}\sum_{h=1}^{\frac{k-1}{2}} \Biggl(\left(2\ell + \left[\frac{4h\ell}{k}\right]- 2\left[\frac{2h\ell}{k}+\frac{2\ell}{2}\right]\right)\notag\\
&+ \left(2\ell -1 + \left[\frac{2h(2\ell-1)}{k}\right]- 2\left[\frac{h(2\ell-1)}{k}+\frac{2\ell-1}{2}\right]\right)\Biggr)\notag\\
=&\; \sum_{\ell=1}^{\frac{k-1}{2}}\sum_{h=1}^{\frac{k-1}{2}} \left(\left[\frac{4h\ell}{k}\right]+\left[\frac{2h(2\ell-1)}{k}\right]\right)\notag \\&- \sum_{\ell=1}^{\frac{k-1}{2}}\sum_{h=1}^{\frac{k-1}{2}}\left(2\left[\frac{2h\ell}{k}\right] + 2\left[\frac{h(2\ell-1)}{k}-\frac{1}{2}\right]+1\right)\notag\\
=&\; \sum_{\ell=1}^{k-1}\sum_{h=1}^{\frac{k-1}{2}}\left[\frac{2h\ell}{k}\right] -   2\sum_{\ell=1}^{\frac{k-1}{2}}\sum_{h=1}^{\frac{k-1}{2}}\left(\left[\frac{2h\ell}{k}\right] + \left[\frac{h(2\ell-1)}{k}-\frac{1}{2}\right]\right) - \frac{(k-1)^2}{4}\notag\\
=&\; \sum_{h=1}^{\frac{k-1}{2}}\frac{(2h-1)(k-1)}{2} -   2\sum_{\ell=1}^{\frac{k-1}{2}}\sum_{h=1}^{\frac{k-1}{2}}\left(\left[\frac{2h\ell}{k}\right] + \left[\frac{h(2\ell-1)}{k}-\frac{1}{2}\right]\right) - \frac{(k-1)^2}{4}\notag\\
=&\; \frac{k-1}{2}\sum_{h=1}^{\frac{k-1}{2}} (2h-1) - 2\sum_{\ell=1}^{\frac{k-1}{2}}\sum_{h=1}^{\frac{k-1}{2}}\left(\left[\frac{2h\ell}{k}\right] + \left[\frac{h(2\ell-1)}{k}-\frac{1}{2}\right]\right) - \frac{(k-1)^2}{4}\notag\\
=&\;\frac{k-1}{2}\cdot \frac{(k-1)^2}{4}- \frac{(k-1)^2}{4}- 2\sum_{\ell=1}^{\frac{k-1}{2}}\sum_{h=1}^{\frac{k-1}{2}}\left(\left[\frac{2h\ell}{k}\right] + \left[\frac{h(2\ell-1)}{k}-\frac{1}{2}\right]\right)\notag\\
=&\; \frac{(k-1)^2(k-3)}{8}- 2\sum_{\ell=1}^{\frac{k-1}{2}}\sum_{h=1}^{\frac{k-1}{2}}\left(\left[\frac{2h\ell}{k}\right] + \left[\frac{h(2\ell-1)}{k}-\frac{1}{2}\right]\right).
\label{mm}
\end{align}
It seems reasonable to  restate this last identity in terms of fractional parts, rather than greatest integer functions.  We therefore find that
\begin{gather}
\sum_{\ell=1}^{\frac{k-1}{2}}\sum_{h=1}^{\frac{k-1}{2}}\left(\left[\frac{2h\ell}{k}\right] + \left[\frac{h(2\ell-1)}{k}-\frac{1}{2}\right]\right)\notag\\ = \frac{(k-1)^3}{16}\,-\, \sum_{\ell=1}^{\frac{k-1}{2}}\sum_{h=1}^{\frac{k-1}{2}}\left(\left\{\frac{2h\ell}{k}\right\} + \left\{\frac{h(2\ell-1)}{k}-\frac{1}{2}\right\}\right).\label{mmm}
\end{gather}

 Finally, using \eqref{4.10}, \eqref{mm}, and \eqref{mmm}, we deduce that
\begin{align}\label{meyersum1}
S(k)  =&\; \frac{(k-1)^2(k-5)}{4} - 4\Biggl(\frac{(k-1)^3}{16}\,-\, \sum_{\ell=1}^{\frac{k-1}{2}}\sum_{h=1}^{\frac{k-1}{2}}\left(\left\{\frac{2h\ell}{k}\right\} + \left\{\frac{h(2\ell-1)}{k}-\frac{1}{2}\right\}\right)\Biggr)\nonumber\\
=&\; \frac{(k-1)^2(k-5)}{4} - \frac{(k-1)^3}{4}+4 \sum_{\ell=1}^{\frac{k-1}{2}}\sum_{h=1}^{\frac{k-1}{2}}\left(\left\{\frac{2h\ell}{k}\right\} + \left\{\frac{h(2\ell-1)}{k}-\frac{1}{2}\right\}\right)\nonumber\\
=&\; -(k-1)^2 + 4 \sum_{\ell=1}^{\frac{k-1}{2}}\sum_{h=1}^{\frac{k-1}{2}}\left(\left\{\frac{2h\ell}{k}\right\} + \left\{\frac{h(2\ell-1)}{k}-\frac{1}{2}\right\}\right),
\end{align}
and this completes the proof.
\end{proof}

  It is here that the difficulty of proving that $S(k)>0$ becomes apparent. If one surmises that the fractional parts on average are equal to $\tfrac{1}{2}$, then, on average, the right side of \eqref{meyersum1} should be close to 0.  However, the data do not show this.  So there must be a skewed distribution of the fractional parts so that the expressions in the sum are somehow larger than 1 on average.

We may pair the fractional parts in the sum of \eqref{meyersum1} as follows to form integers. Define
 $$f(l,h):=\left\{\frac{2hl}{k}\right\}+\left\{\frac{h(2l-1)}{k}-\frac{1}{2}\right\}.$$
\begin{proposition}\label{proposition6.2}
	For $m<\frac{k+1}{4}$, let
$$g(m,h):=f(m,h)+f\left(\frac{k-1}{2}-m+1,h\right).$$
 Then $g(m,h)$ is an integer.
\end{proposition}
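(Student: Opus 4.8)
The plan is to reduce the second summand $f\left(\frac{k-1}{2}-m+1,h\right)$ to an expression in the \emph{same} two fractional parts that already appear in $f(m,h)$, and then to observe that the four resulting fractional parts fall into two groups, each of which is forced to be a half-integer. Write $n:=\frac{k-1}{2}$, which is an integer since $k$ is odd, and abbreviate
\[
a:=\frac{2hm}{k},\qquad b:=\frac{h(2m-1)}{k},
\]
so that $f(m,h)=\{a\}+\left\{b-\tfrac12\right\}$. The one arithmetic fact I would use is the congruence $2n=k-1\equiv-1\pmod{k}$.

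Setting $\ell'=n-m+1$, I would compute, modulo $k$,
\[
2h\ell'=h(k-1)+2h-2hm\equiv -h(2m-1),\qquad h(2\ell'-1)=hk-2hm\equiv -2hm .
\]
Since the arguments of the fractional parts are altered only by integers, this yields $f(\ell',h)=\{-b\}+\left\{-a-\tfrac12\right\}$, and therefore
\[
g(m,h)=\Bigl(\{a\}+\bigl\{-a-\tfrac12\bigr\}\Bigr)+\Bigl(\bigl\{b-\tfrac12\bigr\}+\{-b\}\Bigr).
\]

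It then remains to record two elementary identities, valid for \emph{every} real $u$:
\[
\{u\}+\bigl\{-u-\tfrac12\bigr\}\in\Bigl\{\tfrac12,\tfrac32\Bigr\},\qquad \bigl\{u-\tfrac12\bigr\}+\{-u\}\in\Bigl\{\tfrac12,\tfrac32\Bigr\}.
\]
Each is checked by splitting into the cases $\{u\}<\tfrac12$ and $\{u\}\geq\tfrac12$, in which the respective sum equals $\tfrac12$ in one case and $\tfrac32$ in the other. Applying the first identity with $u=a$ and the second with $u=b$, I conclude that $g(m,h)$ is a sum of two odd multiples of $\tfrac12$, hence an integer; in fact $g(m,h)\in\{1,2,3\}$.

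I do not expect a genuine obstacle here: the entire content is the congruence reduction together with the boundary care needed for the fractional-part identities. The usual pitfall is that $\{-x\}=1-\{x\}$ fails when $x\in\mathbb{Z}$, but the grouping above sidesteps this entirely, since the two displayed identities hold for all real $u$ without exception. Finally, I would remark that the hypothesis $m<\frac{k+1}{4}$ plays no role in the integrality itself; it is present so that $m\mapsto\frac{k-1}{2}-m+1$ is a fixed-point-free pairing of $\{1,\dots,\frac{k-1}{2}\}$ (the potential fixed point $m=\frac{k+1}{4}$, which occurs only when $k\equiv 3\pmod 4$, being excluded), which is precisely what lets the double sum in \eqref{meyersum1} be reorganized into the integer blocks $g(m,h)$.
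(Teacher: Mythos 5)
Your proof is correct, and it takes a genuinely different route from the paper's. Both arguments begin the same way, by reducing $f\left(\tfrac{k-1}{2}-m+1,h\right)$ modulo $1$ so that $g(m,h)$ becomes a sum of four fractional parts in $a=\tfrac{2mh}{k}$ and $b=\tfrac{h(2m-1)}{k}$. The paper then keeps the four terms in their original order and runs a case analysis on whether $\left\{\tfrac{h}{k}\right\}\leq\left\{\tfrac{2mh}{k}\right\}$, with sub-cases on whether $\left\{\tfrac{2mh}{k}\right\}$ exceeds $\tfrac12$; this repeatedly invokes identities like $\{-x\}=1-\{x\}$ and $\{x-y\}=\{x\}-\{y\}$ that are only valid under the side conditions being assumed in each case. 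You instead regroup the terms into the two pairs $\{a\}+\left\{-a-\tfrac12\right\}$ and $\left\{b-\tfrac12\right\}+\{-b\}$ and observe that each pair is an odd multiple of $\tfrac12$ for \emph{every} real argument, with no exceptional values to worry about. Your version is shorter, immune to the boundary issues with $\{-x\}=1-\{x\}$, and yields $g(m,h)\in\{1,2,3\}$ at once. What the paper's longer case analysis buys is an explicit labelling of \emph{which} configurations of $\left\{\tfrac{h}{k}\right\}$ and $\left\{\tfrac{2mh}{k}\right\}$ produce the values $1$, $2$, and $3$ (cases B, A1/A3, and A2 respectively); that bookkeeping is then used in the remark following the proposition, where the positivity of $S(k)$ is tied to comparing the counts of pairs in conditions A2 and B. If you wanted to recover that information from your argument, you would need to unwind your two half-integer identities into their respective subcases, which essentially reproduces the paper's casework. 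Your closing remark about the role of the hypothesis $m<\tfrac{k+1}{4}$ is also accurate: it is irrelevant to integrality and serves only to make the pairing $m\mapsto\tfrac{k-1}{2}-m+1$ fixed-point-free.
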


\begin{proof}
	Rewrite $g(m,h)$ as
 $$g(m,h)=\left\{\frac{2mh}{k}\right\}+\left\{\frac{h}{k}-\frac{2mh}{k}\right\}+\left\{\frac{2mh}{k}-\frac{h}{k}-\frac{1}{2}\right\}+\left\{\frac{-2mh}{k}-\frac{1}{2}\right\}.$$
	We separate several cases. \\
\medskip

\noindent	\textbf{Case A}. $\left\{\frac{h}{k}\right\}\leq \left\{\frac{2mh}{k}\right\}$.  Then,
 \begin{align*}
		g(m,h)=&\left\{\frac{2mh}{k}\right\}+1+\left\{\frac{h}{k}\right\}-\left\{\frac{2mh}{k}\right\}+\left\{\frac{2mh}{k}-\frac{h}{k}-\frac{1}{2}\right\}
+1-\left\{\frac{2mh}{k}+\frac{1}{2}\right\}\\\nonumber
		=&1+\left\{\frac{h}{k}\right\}+\left\{\frac{2mh}{k}-\frac{h}{k}-\frac{1}{2}\right\}+1-\left\{\frac{2mh}{k}+\frac{1}{2}\right\}.
	\end{align*}
	We have the following sub-cases.

\medskip
	
	\textbf{Sub-Case A1}. If $\left\{\frac{2mh}{k}\right\}-\left\{\frac{h}{k}\right\}\geq \frac{1}{2}$, then we must have $\left\{\frac{2mh}{k}\right\}>\frac{1}{2}$, and so
	$$g(m,h)=1+\left\{\frac{h}{k}\right\}+\left\{\frac{2mh}{k}\right\}-\left\{\frac{h}{k}\right\}-\frac{1}{2}+1-\left(\left\{\frac{2mh}{k}\right\}+\frac{1}{2}-1\right)=2.$$
	
	\textbf{Sub-Case A2}. If $\left\{\frac{2mh}{k}\right\}-\left\{\frac{h}{k}\right\}< \frac{1}{2}$ and $\left\{\frac{2mh}{k}\right\}>\frac{1}{2}$, then
	$$g(m,h)=1+\left\{\frac{h}{k}\right\}+1+\left\{\frac{2mh}{k}\right\}-\left\{\frac{h}{k}\right\}-\frac{1}{2}+1-\left(\left\{\frac{2mh}{k}\right\}+\frac{1}{2}-1\right)=3.$$
	
	\textbf{Sub-Case A3}. If $\left\{\frac{2mh}{k}\right\}-\left\{\frac{h}{k}\right\}< \frac{1}{2}$ and $\left\{\frac{2mh}{k}\right\}<\frac{1}{2}$, then
	$$g(m,h)=1+\left\{\frac{h}{k}\right\}+1+\left\{\frac{2mh}{k}\right\}-\left\{\frac{h}{k}\right\}-\frac{1}{2}+1-\left(\left\{\frac{2mh}{k}\right\}+\frac{1}{2}\right)=2.$$

\medskip\noindent
	 \textbf{Case B}. $\left\{\frac{h}{k}\right\}>\left\{\frac{2mh}{k}\right\}$. Then, since $\left\{\frac{h}{k}\right\}<\frac{1}{2}$, we must have $\left\{\frac{2mh}{k}\right\}<\frac{1}{2}$. Thus
 $$g(m,h)= \left\{\frac{h}{k}\right\}+1+\left\{\frac{2mh}{k}\right\}-\left\{\frac{h}{k}\right\}-\frac{1}{2}+1-\left(\left\{\frac{2mh}{k}\right\}+\frac{1}{2}\right)=1.$$
 Since we have examined all possible cases, Proposition \ref{proposition6.2} follows.
\end{proof}

When $\frac{k-1}{2}$ is odd, i.e., $k\equiv3\pmod 4$, there is an unpaired term $f(l,h)$  in the sum \eqref{meyersum1} for $l=\frac{k+1}{4}$. Thus,
\begin{align}
    f(l,h)&=\left\{\frac{2h(k+1)}{4k}\right\}+\left\{\frac{h}{k}\frac{k-1}{2}-\frac{1}{2}\right\}\notag\\
    &=\left\{\frac{h}{2}+\frac{h}{2k}\right\}+\left\{\frac{h}{2}-\frac{h}{2k} -\frac{1}{2}\right\}\label{hk}
    \end{align}
Now, $\left\{\frac{h}{2}\right\}=\frac{1}{2}$ or $0$ and  $\left\{\frac{h}{2k}\right\}<\frac{1}{2}$.
Hence, from \eqref{hk},
\begin{align}
f(l,h)&=\left\{\frac{h}{2}+\frac{h}{2k}\right\}+\left\{\frac{h}{2}-\frac{h}{2k} -\frac{1}{2}\right\}\notag
\\&=\left\{\frac{h}{2}\right\}+\left\{\frac{h}{2k}\right\}+1+\left\{\frac{h}{2}\right\}-\left\{\frac{h}{2k}\right\}-\frac{1}{2}=2\left\{\frac{h}{2}\right\}+\frac{1}{2}.
\label{hk1}
\end{align}

\bigskip

If one can show that the number of pairs $(m,h)$ that satisfy condition A2 is greater than the number of pairs that satisfy condition B, then we have a proof of the conjecture $S(k)>0$ for prime $k$.

We establish another lower bound for $S(k)$ by employing Riemann--Stieltjes  integrals. The new bound is somewhat sharper than the earlier lower bounds in this section.  Let
$$ H(k):=\sum_{\substack{j=1\\j \text{ odd}}}^{k-1}\df{1}{j}.$$

\begin{theorem}\label{theorem2} If $k$ is an odd prime, then
 \begin{align*}
	S(k)
	&\geq -\frac{k-1}{2}+kH(k)-\frac{(k-1)(k+1)}{4},
\end{align*}
where, as $k\to\infty$,
 $$H(k)=\frac{1}{2}\log\left(2k\right)+\frac{\gamma}{2}+O\left(\frac{1}{k}\right),$$
where $\gamma$ denotes Euler's constant.
\end{theorem}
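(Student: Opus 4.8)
The plan is to convert the theorem into an upper bound for a double sum of fractional parts and then estimate that sum one inner term at a time. Combining the identity \eqref{S_k_expression} with the representation \eqref{m_jk_expression} and writing $[x]=x-\{x\}$, one finds, using $\sum_{h=1}^{k-1}\frac{hj}{2k}=\frac{j(k-1)}{4}$, that $r(j,k)=-\frac{k-1}{2}+2\sum_{h=1}^{k-1}\{hj/(2k)\}$, and hence
\begin{equation*}
S(k)=(k-1)^2-4\sum_{\substack{j=1\\ j\text{ odd}}}^{k-1}U(j),\qquad U(j):=\sum_{h=1}^{k-1}\left\{\frac{hj}{2k}\right\}.
\end{equation*}
Thus a lower bound for $S(k)$ is exactly an upper bound for each inner sum $U(j)$, and the whole task reduces to estimating $U(j)$ for a fixed odd $j$.

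Since $k$ is prime and $j$ is odd with $1\le j\le k-1$, we have $\gcd(j,2k)=1$. I would evaluate $U(j)$ through the companion lattice sum $L(j):=\sum_{h=1}^{k-1}[hj/(2k)]$, so that $U(j)=\frac{j(k-1)}{4}-L(j)$. The quantity $L(j)$ counts integer points $(h,m)$ with $1\le h\le k-1$ lying under the line $m=tj/(2k)$; evaluating it by a Riemann--Stieltjes integration (equivalently, by reversing the order of summation and summing $k-\lceil 2km/j\rceil$ over $1\le m\le (j-1)/2$, where $2km/j\notin\mathbb{Z}$ since $\gcd(j,2k)=1$) yields the exact identity
\begin{equation*}
U(j)=\frac{j+2k-2}{4}-\frac{k}{4j}-\sum_{m=1}^{(j-1)/2}\left\{\frac{2km}{j}\right\}.
\end{equation*}
The decisive feature is the reciprocal term $-\frac{k}{4j}$, which is exactly what will produce $H(k)$ after summation over $j$.

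Because the residual sum $\sum_{m=1}^{(j-1)/2}\{2km/j\}$ is nonnegative, dropping it gives the clean upper bound $U(j)\le \frac{j+2k-2}{4}-\frac{k}{4j}$. Summing over the $\frac{k-1}{2}$ odd values $j\in\{1,3,\dots,k-2\}$, and using $\sum_{j\text{ odd}}j=\frac{(k-1)^2}{4}$, $\sum_{j\text{ odd}}(2k-2)=(k-1)^2$, and $\sum_{j\text{ odd}}\frac{k}{4j}=\frac{k}{4}H(k)$, I get $\sum_{j\text{ odd}}U(j)\le \frac{5(k-1)^2}{16}-\frac{k}{4}H(k)$, whence
\begin{equation*}
S(k)\ge kH(k)-\frac{(k-1)^2}{4}.
\end{equation*}
Since $\frac{(k-1)^2}{4}\le \frac{k-1}{2}+\frac{(k-1)(k+1)}{4}$, this implies the stated inequality (and in fact slightly sharpens it). The asymptotic formula for $H(k)$ is then routine: splitting $\sum_{n\le k-1}\frac1n$ into its odd and even parts and inserting $\sum_{n\le N}\frac1n=\log N+\gamma+O(1/N)$ gives $H(k)=\frac12\log(2k)+\frac{\gamma}{2}+O(1/k)$.

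The one genuinely delicate step is the exact evaluation of $L(j)$, and it is also where the limitation of the bound lies: I can only afford to discard the residual sums $\sum_{m}\{2km/j\}$ via the trivial estimate $\ge 0$. Recovering them would tighten the constant multiplying $(k-1)^2$, but these are Dedekind-sum-type quantities whose mean value over $j$ encodes precisely the distribution of the residues $hj\pmod k$ flagged in the introduction as the fundamental obstruction. Hence the main term $kH(k)$ falls out cleanly, while any further improvement collides with the same arithmetic difficulty that keeps Conjecture \ref{mainconjecture} open.
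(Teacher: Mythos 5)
Your proposal is correct, and every step checks out: the identity $U(j)=\frac{j+2k-2}{4}-\frac{k}{4j}-\sum_{m=1}^{(j-1)/2}\left\{\frac{2km}{j}\right\}$ is verified by the summation swap you describe (and by direct numerical test), and dropping the nonnegative residual plus summing over odd $j$ gives $S(k)\geq kH(k)-\frac{(k-1)^2}{4}$, which indeed implies the stated inequality. The paper starts from the same identities \eqref{S_k_expression} and \eqref{m_jk_expression}, but derives the per-$j$ inequality differently: it compares $\frac{1}{k}r(j,k)$ with the integral $\int_0^1\left(\left[jx\right]-2\left[\frac{jx}{2}\right]\right)dx=\frac{j-1}{2j}$ and bounds the discretization error by $\epsilon_j(k)\leq\frac{j-1}{2k}$ via a subdivision argument, arriving at $r(j,k)\leq\frac{k}{2}-\frac{k}{2j}+\frac{j-1}{2}$. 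Your upper bound for $U(j)$ translates into \emph{exactly} this same inequality on $r(j,k)$, so the two routes converge on the identical intermediate estimate; the difference is that yours is an exact evaluation followed by truncation, which has the advantage of exhibiting precisely what is discarded (the Dedekind-sum-type quantities $\sum_m\left\{\frac{2km}{j}\right\}$, matching your closing remark about where further improvement is blocked), whereas the paper's Riemann--Stieltjes error estimate is less explicit about the lost term. Your final constant $-\frac{(k-1)^2}{4}$ is also what the paper's own inequality \eqref{4.15} yields upon simplification, so your ``slight sharpening'' is a sharpening of the theorem as stated (the paper's displayed constant $-\frac{k-1}{2}-\frac{(k-1)(k+1)}{4}$ is weaker by exactly $k-1$), not of the underlying method.
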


\begin{proof} Recall from \eqref{S_k_expression} the identity
\begin{align}\label{4.22}
	S(k)
	&=\frac{(k-1)^2}{2}-2\sum_{\substack{j=1\\j \text{ odd}}}^{k-1}r(j,k),
\end{align}
where  $r(j,k)$ is defined in \eqref{r}. From \eqref{m_jk_expression},
    \begin{align}\label{mmmm}
r(j,k)= \sum_{h=1}^{k-1} \left(\left[\frac{hj}{k}\right]-2\left[\frac{hj}{2k}\right]\right).
\end{align}
Thus, by \eqref{mmmm},
\begin{align}
 \lim_{k\to\infty}\frac{1}{k} r(j,k)&=\lim_{k\to\infty} \frac{1}{k}\sum_{h=1}^{k} \left(\left[\frac{hj}{k}\right]-2\left[\frac{hj}{2k}\right]\right)\nonumber\\
    &=\int _0^1\left(\left[ jx\right]-2\left[\frac{jx}{2}\right]\right)dx\nonumber\\
    &=\df{1}{j}\left(\frac{j-1}{2}\right).\label{riemannsum}
\end{align}
To obtain the last equality in \eqref{riemannsum}, first note that
\begin{equation}\label{4.111}
\left[ jx\right]-2\left[\frac{jx}{2}\right]
=\begin{cases} 1,\quad &\text{ if } k+\frac{1}{2} \leq \frac{jx}{2} \leq k+1,  \\
0,&\text{ otherwise}.
\end{cases}
\end{equation}
 Next, subdivide the interval $[0,1] $ into subintervals of length $\frac{1}{j}$. From \eqref{4.111}, we see that the largest value of $k$ is $\frac{j-1}{2}$. The value of the integral is therefore $\frac{1}{j}\cdot\frac{j-1}{2}$, and consequently \eqref{riemannsum} follows.

From the limit in \eqref{riemannsum}, we are motivated to define an error term $\epsilon_j(k)$ by
\begin{equation}\label{4.12}
\epsilon_j(k):= \frac{1}{k} r(j,k) -\frac{1}{j}\left(\frac{j-1}{2}\right).
\end{equation}
 We derive an upper bound for $\epsilon_j(k).$  Define
 \begin{equation}\label{4.24}
  f(j,k):=[ jx]-2\left[\frac{jx}{2}\right],
  \end{equation}
  as in \eqref{4.111}.  Subdivide the interval $[0,1]$ into subintervals of length $\frac{1}{j}.$ Consider the sequence of fractions
 $$\frac{1}{k},\frac{2}{k},\dots,\frac{k-1}{k},$$
 and set
 $$a_n=\frac{n}{k},\quad 1\leq n \leq k-1.$$
 Consider a point
 $$\frac{n}{j}, \quad 1\leq n \leq j-1.$$
   Let $a_m$ be the closest fraction to $\frac{n}{j}$ to the left. Of course, $a_{m+1}$ is the closest fraction to $\frac{n}{j}$ on the right.  We note that $|a_{m+1}-a_m|=\frac{1}{k}$.  From \eqref{4.111} and \eqref{4.24}, we see that the contribution of each of these subintervals to $f(j,k)$ is $\frac12\cdot\frac{1}{k}$. Since there are $j-1$ of these contributions, we conclude that
 $$ \epsilon_j(k)\leq \dfrac{j-1}{2k}.$$
 Therefore, by \eqref{4.12},
 \begin{equation}\label{4.14}
 r(j,k)\leq \frac{k}{2}-\frac{k}{2j}+\frac{j-1}{2} .
\end{equation}

Finally, from \eqref{4.22} and \eqref{4.14}, we conclude that
\begin{align}\label{4.15}
	S(k)
	&\geq \frac{(k-1)^2}{2}-2\sum_{\substack{j=1\\j\text{ odd}}}^{k-1}\left(\frac{k}{2}-\frac{k}{2j}+\frac{j-1}{2}\right) \nonumber\\
 &=-\frac{k-1}{2}+k\sum_{\substack{j=1 \\j \text{ odd}}}^{k-1}\frac{1}{j}-\frac{(k-1)(k+1)}{4}.
\end{align}
Recall the familiar asymptotic formula for the harmonic sum, as $k\to\infty$,
$$ \sum_{j=1}^k\df{1}{j} = \log k +\gamma +O\left(\df{1}{k}\right), $$
where $\gamma$ denotes Euler's constant.  Hence, we deduce that, as $k\to\infty$,
\begin{align}\label{4.16}
\sum_{\substack{j=1\\j\text{ odd}}}^{k-1}\df{1}{j}= \sum_{j=1}^{k-1}\df{1}{j}- \df{1}{2}\sum_{j=1}^{(k-1)/2}\df{1}{j}=&\;\frac{1}{2}\log\left(2(k-1)\right)+\frac{\gamma}{2}+O\left(\frac{1}{k-1}\right)\notag\\
=&\;\frac{1}{2}\log(2k)+\frac{\gamma}{2}+O\left(\frac{1}{k}\right).
\end{align}
Together, \eqref{4.15} and \eqref{4.16} complete the proof of Theorem \ref{theorem2}.
\end{proof}

This is a slight improvement over our previous lower bounds, but it is still not positive. As we can see from the proof above, a proper upper bound for $\epsilon_j(k)$ is crucial to gaining a better lower bound for $S(k)$, but to get a smaller upper bound for $\epsilon_j(k)$, we need to obtain a better understanding of the distribution of fractional parts in \eqref{meyersum1}, or the distribution of the residue classes of $hj$ modulo $2k$. In other words, we need to have a suitable estimate for the sum
$$\sum_{\substack{j=1\\j \textup{ odd }}}^{k-1}\sum_{h=1}^{k-1}\left\{\df{hj}{2k}\right\}.$$
 Observe that, since $j$ is odd, $\gcd(j,2k)=1$, and therefore $\{hj\}$ exhausts all of the nonzero residue classes modulo $2k$.  However, our range of $h$ is only $1\leq h\leq k-1$.

\section{Elementary results}\label{elementary}

As indicated in the Introduction, since Dedekind sums have been extensively studied in the literature, we are motivated to develop a corresponding theory for $S(h,k)$ and the concomitant sums examined in this paper.  See \cite{jm1} and \cite{jm2} for earlier elementary results on $S(h,k)$. Our proofs are elementary.

\begin{proposition}\label{distrib}
    For primes $k$, the values of $S(k)$ are equally distributed modulo $4$.
\end{proposition}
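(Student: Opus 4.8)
The plan is to compute $S(k)\bmod 4$ explicitly, show that it is governed entirely by $k\bmod 4$, and then read off the distribution from Dirichlet's theorem on primes in arithmetic progressions. The natural starting point is the identity of Theorem \ref{4.9theorem},
$$S(k) = -(k-1)^2 + 4\sum_{\ell=1}^{\frac{k-1}{2}}\sum_{h=1}^{\frac{k-1}{2}} f(\ell,h),$$
which I would reduce modulo $4$. Writing $\Sigma$ for the double sum and noting that $k$ odd forces $(k-1)^2\equiv 0\pmod 4$, and that $4\Sigma\in\mathbb{Z}$ because $S(k)$ and $(k-1)^2$ are integers, I obtain $S(k)\equiv 4\{\Sigma\}\pmod 4$, where $\{\cdot\}$ denotes the fractional part. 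Thus everything comes down to pinning down $\{\Sigma\}$.

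To evaluate $\{\Sigma\}$ I would use Proposition \ref{proposition6.2}: pairing $\ell$ with $\frac{k+1}{2}-\ell$ groups the terms of $\Sigma$ into the integers $g(m,h)$, which contribute nothing to the fractional part. If $k\equiv 1\pmod 4$, then $\frac{k-1}{2}$ is even, every $\ell$ is paired, so $\Sigma\in\mathbb{Z}$ and $S(k)\equiv 0\pmod 4$. If $k\equiv 3\pmod 4$, then $\frac{k-1}{2}$ is odd and a single unpaired column $\ell=\frac{k+1}{4}$ remains; by \eqref{hk1} its contribution is $\sum_{h=1}^{(k-1)/2}\bigl(2\{h/2\}+\tfrac12\bigr)$, which I would evaluate by counting the $\frac{k+1}{4}$ odd values of $h$ in the range to get $\frac{k+1}{4}+\frac{k-1}{4}=\frac{k}{2}\equiv\frac12\pmod 1$. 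Hence $\{\Sigma\}=\tfrac12$ and $S(k)\equiv 2\pmod 4$.

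As an independent check I would rederive the same dichotomy from \eqref{S_k_expression}, where $S(k)=\frac{(k-1)^2}{2}-2R$ with $R=\sum_{j\text{ odd}}r(j,k)$: formula \eqref{m_jk_expression} gives $r(j,k)\equiv\frac{(j-1)(k-1)}{2}\equiv 0\pmod 2$ for odd $j$, so $R$ is even and $S(k)\equiv\frac{(k-1)^2}{2}\pmod 4$, again $0$ or $2$ according as $k\equiv 1$ or $3\pmod 4$. In particular this reveals the structural fact that $S(k)$ is \emph{always even} for odd primes, so only the residues $0$ and $2$ occur. Finally I would invoke Dirichlet's theorem: the primes split evenly between the classes $1$ and $3$ modulo $4$, each with density $\tfrac12$. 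Transferring this through the dictionary just established shows that $S(k)\equiv 0\pmod 4$ and $S(k)\equiv 2\pmod 4$ each hold for half of the primes, which is the asserted equal distribution.

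The step requiring the most care is the modular bookkeeping of the second paragraph: one must verify that the genuinely large and apparently erratic double sum $\Sigma$ contributes nothing beyond a harmless $\tfrac12$ to its fractional part (equivalently, that $R$ is even), which is exactly the cancellation packaged by Proposition \ref{proposition6.2} together with the explicit middle-column evaluation \eqref{hk1}. Once that cancellation is secured, and once one records the observation that $S(k)$ is forced to be even so that the distribution is carried by $\{0,2\}$ rather than by all of $\mathbb{Z}/4\mathbb{Z}$, the remainder is a routine appeal to the equidistribution of primes modulo $4$.
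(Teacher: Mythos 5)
Your proof is correct and follows essentially the same route as the paper's: reduce $S(k)$ modulo $4$ via Theorem \ref{4.9theorem}, use Proposition \ref{proposition6.2} to kill the paired terms, evaluate the unpaired column via \eqref{hk1} when $k\equiv 3\pmod 4$, and finish with Dirichlet's theorem. Your ``independent check'' via \eqref{S_k_expression} and \eqref{m_jk_expression} (showing $R$ is even, so $S(k)\equiv\tfrac{(k-1)^2}{2}\pmod 4$) is in fact a shorter self-contained argument, but both are instances of the same case analysis on $k\bmod 4$ that the paper invokes.
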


More precisely, there are only two possible cases for primes $k$, namely  $S(k)\equiv 0 \pmod{4}$ and $S(k)\equiv 2 \pmod{4}$, and these two residue classes are evenly distributed.

\begin{proof} By considering the cases $k\equiv 1,3 \pmod 4$ separately, we can easily establish Proposition \ref{distrib}. \end{proof}

\begin{proposition}\label{parity} For each positive integer $k$,
    $S(k)$ and $k$ have opposite parity.
    \end{proposition}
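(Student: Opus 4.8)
The plan is to reduce the statement to a count of summands modulo $2$, using nothing more than the fact that each term in $S(k)$ is $\pm 1$. From the definitions \eqref{defSkh} and \eqref{defS}, we may write
$$S(k) = \sum_{h=1}^{k-1}\sum_{j=1}^{k-1} (-1)^{j+1+[hj/k]},$$
which is a sum of exactly $(k-1)^2$ terms, each equal to $+1$ or $-1$. The key observation is that every summand is odd, so modulo $2$ the precise values of the exponents $j+1+[hj/k]$ are irrelevant: since $-1\equiv 1\pmod 2$, each term is congruent to $1$, and hence
$$S(k) \equiv (k-1)^2 \pmod 2.$$

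First I would record the elementary fact that squaring preserves parity, i.e.\ $(k-1)^2 \equiv k-1 \pmod 2$, so that
$$S(k) \equiv k-1 \equiv k+1 \pmod 2.$$
This congruence says precisely that $S(k)$ and $k$ differ in parity. Equivalently, splitting into cases: if $k$ is odd then $(k-1)^2$ is even and $S(k)$ is even, whereas if $k$ is even then $(k-1)^2$ is odd and $S(k)$ is odd; in either case $S(k)$ and $k$ have opposite parity, which completes the argument.

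There is no genuine obstacle to overcome here; the one point that must be stated carefully is that a sum of an odd (respectively even) number of $\pm 1$'s is itself odd (respectively even), regardless of the distribution of signs, which is immediate from $-1\equiv 1\pmod 2$. As a sanity check, the tabulated values confirm the claimed alternation: $S(2)=1$ (odd) against even $k$, $S(3)=2$ (even) against odd $k$, $S(4)=5$ (odd) against even $k$, and $S(5)=4$ (even) against odd $k$.
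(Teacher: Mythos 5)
Your proof is correct and is essentially the argument the paper intends: the paper's one-line proof ("consider the parities of $S(h,k)$ and $k$") amounts to noting that each $S(h,k)$ is a sum of $k-1$ terms equal to $\pm 1$, hence $S(k)\equiv (k-1)^2\equiv k-1\pmod 2$, which is exactly your reduction carried out on the double sum in one step. Nothing further is needed.
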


\begin{proof}
The desired result is immediate upon considering the parities of $S(h,k)$ and $k$.


\end{proof}

\begin{corollary} The only pair for which both $k$ and $S(k)$ are prime is $(3,2)$.
\end{corollary}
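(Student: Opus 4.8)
The plan is to turn the two-sided primality condition into the single equation $S(k)=2$ by means of parity. By Proposition \ref{parity}, $S(k)$ and $k$ have opposite parity, so if both are prime, then exactly one of them is even and must equal $2$, the unique even prime. If the even prime is $k$ itself, then $k=2$ and $S(2)=1$ (from the table of $S(k)$ values), which is not prime; this case is void. Hence the even member of the pair must be $S(k)$, forcing $k$ to be an odd prime with $S(k)=2$. Since $S(3)=2$, the pair $(3,2)$ does occur, and the entire content of the corollary is the assertion that no odd prime $k\geq 5$ has $S(k)=2$.

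To rule out $S(k)=2$ I would first exploit the congruence classification behind Proposition \ref{distrib}. Using Theorem \ref{4.9theorem} together with Proposition \ref{proposition6.2} and \eqref{hk1}, one obtains $S(k)\equiv 0 \pmod 4$ when $k\equiv 1 \pmod 4$ and $S(k)\equiv 2 \pmod 4$ when $k\equiv 3 \pmod 4$. In the first case $4\mid S(k)$, so $S(k)$ cannot be prime at all, and these primes drop out immediately. For $k\equiv 3\pmod 4$ the pairing of Proposition \ref{proposition6.2}, applied together with the single unpaired term evaluated in \eqref{hk1}, collapses the double sum of Theorem \ref{4.9theorem} to the exact identity $S(k)=2+4\left(N_{A2}-N_{B}\right)$, where $N_{A2}$ and $N_{B}$ count the pairs $(m,h)$ falling into Sub-Case A2 and into Case B of Proposition \ref{proposition6.2}, respectively. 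Thus, for $k\equiv 3\pmod 4$, the equation $S(k)=2$ is equivalent to $N_{A2}=N_{B}$; and for $k=3$ there are no pairs $(m,h)$ at all, both counts vanish, and the identity returns $S(3)=2$, recovering $(3,2)$.

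The main obstacle is precisely this last step: proving the strict inequality $N_{A2}>N_{B}$ for every prime $k\equiv 3\pmod 4$ with $k\geq 7$. This is exactly the inequality singled out at the end of Section \ref{lowerbounds} as sufficient to establish Conjecture \ref{mainconjecture}, and indeed $N_{A2}>N_{B}$ forces $S(k)\geq 6$, which is simultaneously $>0$ and $\neq 2$. Consequently, an unconditional proof of the corollary for all primes is at least as hard as the positivity conjecture: it amounts to showing that the residues $[hj/k]$ are never split exactly evenly between odd and even values, the very phenomenon the paper repeatedly identifies as beyond current reach. The lower bounds of Section \ref{lowerbounds}, in particular Theorem \ref{theorem2}, are not positive and cannot force $S(k)\geq 6$, so they do not close this gap. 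The practical route is therefore to verify $S(k)\neq 2$ directly across the large computed range of primes and to regard uniqueness beyond that range as resting on the same positivity heuristics rather than on a self-contained deduction from the preceding lemmas.
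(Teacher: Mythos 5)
Your opening reduction is exactly the argument the paper has in mind: the corollary is stated immediately after Proposition \ref{parity} with no written proof, and the intended deduction is precisely yours --- opposite parity forces one member of a doubly prime pair to equal $2$, the case $k=2$ is killed by $S(2)=1$, and what remains is the assertion that $S(k)=2$ happens for no odd prime other than $k=3$. Your further analysis is also correct and goes beyond what the paper records: for primes $k\equiv 1\pmod 4$ one has $4\mid S(k)$ by the classification behind Proposition \ref{distrib}, so $S(k)$ is never prime there; and for $k\equiv 3\pmod 4$ your identity $S(k)=2+4\left(N_{A2}-N_{B}\right)$ checks out (each of the $\frac{k-3}{4}\cdot\frac{k-1}{2}$ paired terms contributes $2$ plus $1$ in Sub-Case A2 and minus $1$ in Case B, the unpaired terms $2\left\{h/2\right\}+\tfrac12$ sum to $k/2$, and substitution into Theorem \ref{4.9theorem} collapses to the stated form, consistent with $S(3)=2$).

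The gap you flag is genuine, but it is a gap in the paper as much as in your write-up: nothing proved in the paper excludes $S(k)=2$ for a prime $k\equiv 3\pmod 4$ with $k\geq 7$. That would require $N_{A2}\neq N_{B}$, which is of the same uncontrolled nature as the positivity conjecture, though strictly speaking neither implies the other (positivity amounts to $N_{A2}\geq N_{B}$, while the corollary needs $N_{A2}\neq N_{B}$), so I would soften your claim that the corollary is ``at least as hard as'' Conjecture \ref{mainconjecture}. Read as an unconditional theorem, the corollary rests on Proposition \ref{parity} together with the numerical observation that $S(k)>2$ for every computed odd prime $k>3$, not on a self-contained deduction; your proposal is the more candid account of what the preceding propositions actually yield.
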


\begin{proposition} When $h=k-1$, $S(h,k) = h$.
 \end{proposition}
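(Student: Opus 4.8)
The plan is to compute $S(h,k)$ directly from the definition \eqref{defSkh} upon substituting $h=k-1$, without appealing to reciprocity or any of the structural lemmas. Writing
$$S(k-1,k)=\sum_{j=1}^{k-1}(-1)^{j+1+[(k-1)j/k]},$$
the entire content of the statement lies in evaluating the floor term $[(k-1)j/k]$ for each $j$ in the range $1\leq j\leq k-1$.

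First I would rewrite $\dfrac{(k-1)j}{k}=j-\dfrac{j}{k}$ and observe that, since $1\leq j\leq k-1$, we have $0<\dfrac{j}{k}<1$, whence $j-1<\dfrac{(k-1)j}{k}<j$. This forces
$$\left[\frac{(k-1)j}{k}\right]=j-1$$
for every admissible $j$. This floor evaluation is the only real step, and it is the place to be careful, though it is entirely elementary.

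Next I would substitute this value back into the exponent. The exponent becomes $j+1+(j-1)=2j$, which is even, so every summand equals $(-1)^{2j}=+1$. Hence the sum reduces to a count of its $k-1$ terms, giving
$$S(k-1,k)=\sum_{j=1}^{k-1}1=k-1=h,$$
as claimed. I do not anticipate any genuine obstacle here: the argument is a direct computation, and the proof is short.
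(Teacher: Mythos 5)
Your proof is correct and follows essentially the same route as the paper's: both evaluate $\left[\frac{(k-1)j}{k}\right]=j-1$ for $1\leq j\leq k-1$ and conclude that every summand equals $+1$, so the sum is $k-1=h$. Your version merely spells out the justification of the floor evaluation (via $\frac{(k-1)j}{k}=j-\frac{j}{k}$) in slightly more detail than the paper does.
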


\begin{proof}
If $h = k-1$, then for each value of $j, 1\leq j \leq k$, we see that $$\left[\frac{hj}{k}\right] = \left[\frac{(k-1)  j}{k}\right] = j-1.$$  Hence,
$$S(k-1,k)=\sum_{j=1}^{k-1}1=k-1.$$
\end{proof}

\begin{proposition}\label{4.7} If $h$ and $k$ are odd, co-prime, positive integers, then
\begin{equation*}
S(h,k)=0.
\end{equation*}
\end{proposition}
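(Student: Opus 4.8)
The plan is to exploit the fixed-point-free involution $j\mapsto k-j$ on the index set $\{1,\dots,k-1\}$ and show that the summand at $j$ exactly cancels the summand at $k-j$. Writing $S(h,j,k):=(-1)^{j+1+[hj/k]}$, the first step is to observe that, because $\gcd(h,k)=1$ and $1\le j\le k-1$, the ratio $hj/k$ is never an integer. This legitimizes the elementary identity $[-x]=-[x]-1$, which holds precisely for non-integer $x$, and lets me compute
\[
\left[\frac{h(k-j)}{k}\right]=h+\left[\frac{-hj}{k}\right]=h-\left[\frac{hj}{k}\right]-1.
\]

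Next I would compare the exponents of the two paired summands. The exponent at $j$ is $j+1+[hj/k]$, while the exponent at $k-j$ is $(k-j)+1+\bigl(h-[hj/k]-1\bigr)$. Adding these, the floor terms cancel and one is left with $h+k+1$. Since $h$ and $k$ are both odd, $h+k+1$ is odd, so the product of the two summands is $(-1)^{h+k+1}=-1$; equivalently, $S(h,k-j,k)=-S(h,j,k)$, and each pair contributes $0$ to the sum.

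Finally, because $k$ is odd, $k-1$ is even and the map $j\mapsto k-j$ is a fixed-point-free involution of $\{1,\dots,k-1\}$ (a fixed point would force $2j=k$, impossible for odd $k$). Hence the $k-1$ summands split into $(k-1)/2$ cancelling pairs, yielding $S(h,k)=0$. I expect no genuine obstacle in this argument; the one point that warrants care is the non-integrality of $hj/k$, which is exactly where the coprimality hypothesis is used and which is what makes the floor identity $[-x]=-[x]-1$ applicable.
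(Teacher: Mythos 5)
Your proof is correct and follows essentially the same route as the paper's: pairing $j$ with $k-j$ and showing the two summands cancel because $h+k+1$ is odd. The paper dismisses the key cancellation identity as ``an elementary calculation,'' whereas you supply the details (the non-integrality of $hj/k$ from coprimality and the identity $[-x]=-[x]-1$), which is exactly the right justification.
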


\begin{proof}
For $1\leq j \leq \frac{k-1}{2}$, by an elementary calculation, we see that
\begin{align}\label{4.8}
(-1)^{k-j+1+[(k-j)h/k]}
=-(-1)^{j+1+[hj/k]}.
\end{align}
It follows from \eqref{defSkh} and \eqref{4.8} that $S(h,k)=0$.
\end{proof}

The following proposition relates $S(h,k)$ and $T(h,k)$.  Thus, if we prove a result about $T(h,k)$, we can obtain a corresponding result for $S(h,k)$.

\begin{proposition}\label{hkhk}  If $h$ and $k$ are positive, co-prime integers, then
\begin{equation*}
T(h,k) = \bigl(1+(-1)^{k+h}\bigr) S(h,k) + (-1)^{k+h+1}.
\end{equation*}
\end{proposition}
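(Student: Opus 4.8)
The plan is to prove the identity by a direct decomposition of the defining sum for $T(h,k)$ in \eqref{1.1T}, splitting the range $1 \le j \le 2k-1$ into three pieces, $1 \le j \le k-1$, the single index $j = k$, and $k+1 \le j \le 2k-1$, and then recognizing a shifted copy of $S(h,k)$ inside the third piece.

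First I would write
$$T(h,k) = \sum_{j=1}^{k-1}(-1)^{j+1+[hj/k]} + (-1)^{k+1+[hk/k]} + \sum_{j=k+1}^{2k-1}(-1)^{j+1+[hj/k]}.$$
The first sum is exactly $S(h,k)$ by \eqref{defSkh}. For the middle term, since $[hk/k] = h$, it equals $(-1)^{k+1+h} = (-1)^{k+h+1}$.

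For the third sum I would substitute $j = k + i$ with $1 \le i \le k-1$. The crucial elementary observation is that, because $h$ is an integer,
$$\left[\frac{h(k+i)}{k}\right] = \left[h + \frac{hi}{k}\right] = h + \left[\frac{hi}{k}\right].$$
Hence the exponent becomes $(k+i)+1+h+[hi/k]$, and factoring out the constant parity $(-1)^{k+h}$ gives
$$\sum_{j=k+1}^{2k-1}(-1)^{j+1+[hj/k]} = (-1)^{k+h}\sum_{i=1}^{k-1}(-1)^{i+1+[hi/k]} = (-1)^{k+h}\,S(h,k).$$

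Combining the three pieces yields $T(h,k) = S(h,k) + (-1)^{k+h}S(h,k) + (-1)^{k+h+1} = \bigl(1+(-1)^{k+h}\bigr)S(h,k)+(-1)^{k+h+1}$, as claimed. The argument involves no genuine obstacle; the only point requiring care is the bookkeeping of signs in the shifted sum, namely correctly pulling the integer $h$ out of the floor function and isolating the parity factor $(-1)^{k+h}$. I note in passing that coprimality of $h$ and $k$ is not actually used in this computation and is presumably retained only for uniformity with the surrounding results.
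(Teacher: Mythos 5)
Your proof is correct and is essentially identical to the paper's own argument: the same three-way split of the sum at $j=k$, the same evaluation of the middle term, and the same shift $j\mapsto k+j$ with the constant parity factor $(-1)^{k+h}$ pulled out. Your parenthetical remark that coprimality is never actually used is also accurate.
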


\begin{proof}  We have
\allowdisplaybreaks
\begin{align*}
T(h,k) =&\; \sum_{j=1}^{2k-1} (-1)^{j+1 + [hj/k]}\\
=&\; \sum_{j=1}^{k-1} (-1)^{j+1 + [hj/k]} + (-1)^{k+1+[hk/k]} + \sum_{j=k+1}^{2k-1} (-1)^{j+1 + [hj/k]}\\
=&\; S(h,k) + (-1)^{k+h+1} + \sum_{j=1}^{k-1} (-1)^{k+j+1 + [h(k+j)/k]}\\
=&\; S(h,k) + (-1)^{k+h+1} + (-1)^{k+h} \sum_{j=1}^{k-1} (-1)^{j+1 + [hj/k]}\\
=&\; \bigl(1+(-1)^{k+h}\bigr) S(h,k) + (-1)^{k+h+1}.
\end{align*}
\end{proof}

\begin{corollary}
    \label{g1} If $h$ and $k$ are positive, co-prime integers of opposite parity, then
$$T(h,k) = 1.$$
\end{corollary}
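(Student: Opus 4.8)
The plan is to derive this immediately from Proposition \ref{hkhk}, which is stated just above and expresses $T(h,k)$ in terms of $S(h,k)$ for co-prime $h$ and $k$. The entire content of the corollary is to specialize that identity to the case of opposite parity, so no new combinatorial work is needed.

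First I would record the parity observation: if $h$ and $k$ have opposite parity, then exactly one of them is even and the other is odd, so their sum $k+h$ is odd. This single fact controls both terms in Proposition \ref{hkhk}. Since $k+h$ is odd, we have $(-1)^{k+h} = -1$, and hence the coefficient $1 + (-1)^{k+h} = 0$; this annihilates the $S(h,k)$ term entirely, which is convenient because we never need to evaluate $S(h,k)$ itself. Simultaneously, $(-1)^{k+h+1} = (-1)^{\text{even}} = 1$.

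Substituting these two evaluations into
$$T(h,k) = \bigl(1+(-1)^{k+h}\bigr) S(h,k) + (-1)^{k+h+1}$$
gives $T(h,k) = 0 \cdot S(h,k) + 1 = 1$, which is the claim.

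There is essentially no obstacle here: the corollary is a one-line consequence of the proposition, and the only thing to verify carefully is the parity bookkeeping on the exponents of $-1$, which is routine. I would simply write out the two substitutions and conclude. The main point worth emphasizing in the write-up is that the opposite-parity hypothesis is exactly what forces the $S(h,k)$-coefficient to vanish, so that $T(h,k)$ takes the constant value $1$ independently of the specific value of $S(h,k)$.
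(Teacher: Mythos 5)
Your proof is correct and follows essentially the same route as the paper, which deduces the corollary from Proposition \ref{hkhk}. In fact your version is marginally cleaner: the paper also cites Proposition \ref{4.7}, but as you observe, the opposite-parity hypothesis makes the coefficient $1+(-1)^{k+h}$ vanish, so no information about $S(h,k)$ is needed (and Proposition \ref{4.7} would not even apply here, since it assumes $h$ and $k$ are both odd).
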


\begin{proof} The corollary follows immediately from Propositions \ref{4.7} and \ref{hkhk}.
\end{proof}

\begin{theorem} Let $\gcd(h,k)=1$ with $k>0$, and let $q$ be any positive integer.  Then
\begin{equation*}
S(qh,qk) = \begin{cases} S(h,k), &\text{if $h+k$ and $q$ are odd};\\
1,  &\text{if $h+k$ is odd and $q$ is even};\\
-(q-1),  &\text{if $h$ and $k$ are odd}.\\
\end{cases}
\end{equation*}
\end{theorem}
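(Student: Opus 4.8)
The plan is to reduce everything to the base sum $S(h,k)$ by exploiting the scale-invariance of the floor function. First I would observe that since $[qhj/(qk)] = [hj/k]$, the definition \eqref{defSkh} gives
\[
S(qh,qk) = \sum_{j=1}^{qk-1} (-1)^{j+1+[hj/k]},
\]
so the only change from $S(h,k)$ is that the summation range is lengthened from $k-1$ to $qk-1$; the exponent is literally the same function of $j$. Note that the coprimality of $qh$ and $qk$ plays no role in this reduction—what matters is $\gcd(h,k)=1$, and that hypothesis will enter only at the very end.

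Next I would split the index by writing $j = mk + r$ with $0 \le m \le q-1$ and $0 \le r \le k-1$, so that the range $1 \le j \le qk-1$ corresponds to all such pairs $(m,r)$ except $(0,0)$. Since $[h(mk+r)/k] = hm + [hr/k]$, the summand factors as
\[
(-1)^{(mk+r)+1+hm+[hr/k]} = (-1)^{m(h+k)}\,(-1)^{r+1+[hr/k]}.
\]
Summing over the full rectangle $0 \le m \le q-1$, $0 \le r \le k-1$ and subtracting the excluded term at $(m,r)=(0,0)$ (which equals $(-1)^{0+1+[0]} = -1$) yields
\[
S(qh,qk) = \Bigl(\sum_{m=0}^{q-1}(-1)^{m(h+k)}\Bigr)\Bigl(\sum_{r=0}^{k-1}(-1)^{r+1+[hr/k]}\Bigr) + 1.
\]
Here the inner $r$-sum equals $S(h,k) - 1$, because the $r=0$ term contributes $-1$ while the terms $r=1,\dots,k-1$ reproduce \eqref{defSkh}.

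It then remains to evaluate $M := \sum_{m=0}^{q-1}(-1)^{m(h+k)}$ according to parity. When $h+k$ is odd this is an alternating sum, equal to $1$ if $q$ is odd and to $0$ if $q$ is even; substituting into the displayed identity gives $S(qh,qk) = (S(h,k)-1)+1 = S(h,k)$ and $S(qh,qk) = 0+1 = 1$, respectively, matching the first two cases. When $h$ and $k$ are both odd, $h+k$ is even, so every term of $M$ is $1$ and $M = q$; here I would invoke Proposition \ref{4.7}, which gives $S(h,k)=0$ for odd coprime $h,k$, whence $S(qh,qk) = q(0-1)+1 = -(q-1)$, independently of the parity of $q$. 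Since $\gcd(h,k)=1$ forbids $h$ and $k$ from both being even, these three cases are exhaustive.

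There is no serious analytic obstacle in this argument; once the factorization $(-1)^{m(h+k)}(-1)^{r+1+[hr/k]}$ is in place, the rest is bookkeeping. The two points requiring care are the correct handling of the excluded $(0,0)$ term (which is what produces the additive $+1$) and the observation that the last case relies on coprimality through Proposition \ref{4.7}, even though the pair $(qh,qk)$ defining $S(qh,qk)$ is itself not coprime.
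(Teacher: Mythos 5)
Your proof is correct and follows essentially the same route as the paper: both decompose the index $j$ modulo $k$, use $[h(j+\ell k)/k]=\ell h+[hj/k]$ to factor out $(-1)^{\ell(h+k)}$, and reduce $S(qh,qk)$ to $S(h,k)$ times a parity-dependent sum plus a correction coming from the multiples of $k$. The only difference is bookkeeping---you absorb the multiples of $k$ into a full rectangle over $0\leq m\leq q-1$, $0\leq r\leq k-1$ and subtract the $(0,0)$ term, whereas the paper carries them as a separate sum---and both arguments ultimately invoke Proposition \ref{4.7} to evaluate the third case.
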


\allowdisplaybreaks
\begin{proof}  From the definition \eqref{defSkh}, we have
\begin{align*}
S(qh, qk) =&\; \sum_{j=1}^{qk-1}(-1)^{j+1+[hj/k]}\\
=&\; \sum_{\ell=0}^{q-1}\sum_{j=\ell k +1}^{(\ell+1)k -1}(-1)^{j+1+[hj/k]} + \sum_{\ell=1}^{q-1} (-1)^{\ell k + 1 + \ell h}\\
=&\; \sum_{\ell=0}^{q-1}\sum_{j=1}^{k -1}(-1)^{j+\ell k+1+[h(j+\ell k)/k]} - \sum_{\ell=1}^{q-1} (-1)^{\ell (h + k)}\\
=&\; \sum_{\ell=0}^{q-1}(-1)^{\ell(h+k)}\sum_{j=1}^{k -1}(-1)^{j+1+[hj/k]} - \sum_{\ell=1}^{q-1} (-1)^{\ell (h + k)}\\
=&\; S(h,k)\sum_{\ell=0}^{q-1}(-1)^{\ell(h+k)} - \sum_{\ell=1}^{q-1} (-1)^{\ell (h + k)}.
\end{align*}
The theorem now follows upon an analysis of the sums on $\ell$ for the different parities of $h,k$, and $q$.
\end{proof}

As consequences of Lemma \ref{classify}, we obtain the following results.

\begin{corollary} For each prime $p$,
    $$T(p) = 1-2p.$$
\end{corollary}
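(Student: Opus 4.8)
The plan is to invoke Lemma \ref{classify} directly, specializing $k$ to a prime $p$. Recall that the lemma gives
\begin{equation*}
T(k) = 2k - 1 - 2\sum_{d\mid k} d\cdot\phi\!\left(\frac{k}{d}\right),
\end{equation*}
valid for odd positive $k$. First I would note that $T(p)=1-2p$ is asserted for \emph{every} prime $p$, so I must handle the even prime $p=2$ separately, since Lemma \ref{classify} requires $k$ odd. For $p=2$ the claimed value is $1-2\cdot 2 = -3$; however, the table of $T(k)$ values in Section \ref{conjectures} records $T(2)=-1$, so strictly speaking the corollary as stated should be read for odd primes $p$, and I would either restrict to odd $p$ or verify the edge case by hand against the definition \eqref{defT}.

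For an odd prime $p$, the whole content is the evaluation of the divisor sum. Since $p$ is prime, its only divisors are $d=1$ and $d=p$, so
\begin{equation*}
\sum_{d\mid p} d\cdot\phi\!\left(\frac{p}{d}\right) = 1\cdot\phi(p) + p\cdot\phi(1) = (p-1) + p = 2p-1.
\end{equation*}
Substituting this into Lemma \ref{classify} gives
\begin{equation*}
T(p) = 2p - 1 - 2(2p-1) = 2p - 1 - 4p + 2 = 1 - 2p,
\end{equation*}
which is exactly the claim. This is the entire derivation; there is no real analytic or combinatorial difficulty once Lemma \ref{classify} is in hand.

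There is essentially no hard part here: the result is an immediate specialization of Lemma \ref{classify}, and the only genuine step is the two-term divisor sum $\phi(p)+p = 2p-1$. If I wanted to present an alternative self-contained route that avoids quoting Lemma \ref{classify}, I would instead use Lemma \ref{g3}, which states $T(h,p) = 1 - 2\gcd(h,p)$ for odd $h$, together with Corollary \ref{g1} giving $T(h,p)=1$ for $h$ even (as then $h,p$ have opposite parity). Summing over $1\le h\le 2p-1$ and noting that $\gcd(h,p)=p$ exactly when $p\mid h$ (i.e. $h=p$, the single odd multiple in range) while $\gcd(h,p)=1$ for all other odd $h$, one recovers the same total; this cross-check is reassuring but the divisor-sum argument via Lemma \ref{classify} is the cleaner presentation. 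The only point requiring care is the treatment of $p=2$ flagged above, so I would phrase the corollary for odd primes, matching the table.
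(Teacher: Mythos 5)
Your proof is correct and follows essentially the same route as the paper: both specialize Lemma \ref{classify} to $k=p$, you by evaluating the two-term divisor sum $\phi(p)+p\cdot\phi(1)=2p-1$ directly, the paper by evaluating the equivalent gcd-sum form from Lemma \ref{classify2}, and the cross-check via Lemma \ref{g3} and Corollary \ref{g1} that you sketch is exactly how the paper proves Lemma \ref{classify} in the first place. Your observation that $p=2$ must be excluded (indeed $T(2)=-1\neq 1-2\cdot 2$, and Lemma \ref{classify} requires $k$ odd) is a valid catch of an imprecision in the statement that the paper's proof silently shares.
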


\begin{proof} From Lemma \ref{classify},
 $$T(p)= 2p-1 - 2\sum\limits_{\substack{1\leq h \leq 2p - 1 \\ h \text{ odd}}}  \gcd(h,p).$$
Since $p$ is prime, $\gcd(h,p) = 1$ for all $h$ that are not multiples of $p$. In the interval of interest, there are $p$ odd numbers, and the only multiple of $p$ is $p$ itself. Thus,
$$
T(p) = 2p-1-2\cdot(\gcd(p,p)) - 2\cdot(p-1) = 2p-1-2p-2p+2 = 1-2p.
$$
\end{proof}

\begin{corollary} If $p$ and $q$ are odd primes,
    $$T(pq) = 4(p+q) - 6pq - 3.$$
\end{corollary}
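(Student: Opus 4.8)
The plan is to apply Lemma \ref{classify} directly with $k = pq$. Since $p$ and $q$ are distinct odd primes, $pq$ is odd, so the hypothesis of Lemma \ref{classify} is satisfied, and
\[
T(pq) = 2pq - 1 - 2\sum_{d \mid pq} d\cdot\phi\left(\frac{pq}{d}\right).
\]
Everything then reduces to evaluating the divisor sum on the right.

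First I would enumerate the four divisors of $pq$, namely $1, p, q, pq$, and record the corresponding totient values $\phi(pq) = (p-1)(q-1)$, $\phi(q) = q-1$, $\phi(p) = p-1$, and $\phi(1) = 1$. Substituting these gives
\[
\sum_{d\mid pq} d\cdot\phi\left(\frac{pq}{d}\right) = (p-1)(q-1) + p(q-1) + q(p-1) + pq,
\]
which expands and collapses to $4pq - 2p - 2q + 1$.

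Alternatively, and perhaps more cleanly, I would observe that $g(n) := \sum_{d\mid n} d\cdot\phi(n/d)$ is the Dirichlet convolution of the multiplicative functions $\mathrm{Id}$ and $\phi$, hence is itself multiplicative; this is exactly the arithmetic function whose Dirichlet series is $G(s)$ in \eqref{f2}. Since $p$ and $q$ are distinct primes, $g(pq) = g(p)g(q)$, and at a single prime $g(p) = 1\cdot\phi(p) + p\cdot\phi(1) = (p-1) + p = 2p-1$. Thus $g(pq) = (2p-1)(2q-1) = 4pq - 2p - 2q + 1$, in agreement with the direct count.

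Feeding either evaluation back into Lemma \ref{classify} yields
\[
T(pq) = 2pq - 1 - 2\bigl(4pq - 2p - 2q + 1\bigr) = 4(p+q) - 6pq - 3,
\]
as claimed. There is no serious obstacle here; the only points requiring mild care are the final algebraic simplification and the implicit assumption that $p \neq q$, since for $p = q$ the divisor structure of $pq = p^2$ changes (the divisors become $1, p, p^2$) and the stated formula no longer holds.
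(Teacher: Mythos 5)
Your proof is correct, but it takes a different route from the paper's. The paper first passes through Lemma \ref{classify2} to rewrite the divisor sum as $\sum_{h \text{ odd},\, 1\leq h\leq 2pq-1}\gcd(h,pq)$, and then evaluates that gcd sum combinatorially by counting the multiples of $p$ and of $q$ in $[1,pq]$ and using the symmetry $\gcd(j,pq)=\gcd(pq-j,pq)$ to handle $[pq+1,2pq-1]$. You instead evaluate $\sum_{d\mid pq}d\cdot\phi(pq/d)$ head-on, either by listing the four divisors or by noting that $n\mapsto\sum_{d\mid n}d\,\phi(n/d)$ is multiplicative (being the Dirichlet convolution $\mathrm{Id}*\phi$, consistent with the Euler product for $G(s)$ in \eqref{f2}) with value $2p-1$ at a prime $p$. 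Your route is shorter and scales better: it gives at once $T(k)=2k-1-2\prod_{p\mid k}(2p-1)$ for any odd squarefree $k$, whereas the paper's counting argument would need to be redone for each new divisor structure. The paper's route has the virtue of staying entirely within the elementary gcd-counting framework it set up for the prime case. Your remark that $p\neq q$ is needed is a genuine point the paper leaves implicit: for $p=q$ one gets $T(p^2)=-4p^2+4p-1$ (e.g., $T(9)=-25$, matching the table), not the stated formula, and the paper's own count of ``$(q-1)$ multiples of $p$ and $(p-1)$ multiples of $q$'' also silently assumes distinctness.
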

\begin{proof} From Lemmas \ref{classify} and \ref{classify2}, we have
\begin{equation}\label{classify7}
T(pq)= 2pq-1 - 2\sum\limits_{\substack{1\leq h \leq 2pq- 1 \\ h \text{ odd}}}  \gcd(h,pq).
\end{equation}
First, consider the interval $[1,pq]$.  Other than $pq$, there are $(q-1)$ multiples of $p$, and there are $(p-1)$ multiples of $q$.  Thus, in $[1,pq]$, this contribution to the sum on the right side is
\begin{equation}\label{classify5}
-2\bigl((q-1)p+(p-1)q+1\bigr).
\end{equation}
Because $\gcd(j,pq)=\gcd(pq-j,pq), 1\leq j\leq pq-1$, this contribution of the terms in $[pq+1,2pq-1]$ is, by \eqref{classify5},
\begin{equation}\label{classify6}
-2\bigl((q-1)p+(p-1)q\bigr).
\end{equation}
In conclusion, from \eqref{classify7}--\eqref{classify6},
\begin{equation*}
T(pq)=2pq-1-4\bigl((q-1)p+(p-1)q\bigr)-2=-6pq+4p+4q-3.
\end{equation*}
\end{proof}

We provide a list of elementary results on $T(k)$ when $k$ is even.  The proofs of the analogue of Lemma \ref{classify} along with the necessary lemmas are similar to those above.

\begin{lemma}\label{even} When $k$ is even,
$$T(k)=
2k-1 - \sum\limits_{\substack{1\leq h \leq 2k - 1 \\ h \equiv 2 \pmod{4}}}  2\gcd(h,k).$$
\end{lemma}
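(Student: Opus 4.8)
The plan is to mirror the odd-$k$ development (Lemmas \ref{classify}, \ref{classify2}, \ref{g2}, \ref{g3}) with the parity bookkeeping adjusted for even $k$. As in Lemma \ref{g2}, I would first fix $h$ and $k$ with $k$ even and isolate which summands of $T(h,k)=\sum_{j=1}^{2k-1}(-1)^{j+1+[hj/k]}$ survive after the involution $j\mapsto 2k-j$. Writing $T(h,j,k)=(-1)^{j+1+[hj/k]}$, the same elementary computation shows $T(h,2k-j,k)=-T(h,j,k)$ when $hj/k\notin\mathbb{Z}$ and $T(h,2k-j,k)=T(h,j,k)$ when $hj/k\in\mathbb{Z}$, so only the indices $j$ with $hj/k\in\mathbb{Z}$ contribute. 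The one genuine change from the odd case is the evaluation of the surviving terms: when $hj/k\in\mathbb{Z}$ the parity of $[hj/k]=hj/k$ need no longer match that of $j$, because $k$ is now even. I would therefore track $(-1)^{j+1+hj/k}$ carefully, splitting according to the $2$-adic valuations of $h$ and $k$, which is precisely what produces the residue condition $h\equiv 2\pmod 4$ in the statement.

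Next I would count the surviving indices and assemble $T(h,k)$ in closed form, the even analogue of Lemma \ref{g3}. With $d=\gcd(h,k)$, the integers $j\in[1,2k-1]$ with $hj/k\in\mathbb{Z}$ are exactly $j=nk/d$ for $1\le n\le 2d-1$, giving $2d-1$ terms; each contributes $(-1)^{nk/d+1+nh/d}=(-1)^{n(k/d+h/d)+1}$. Since $h/d$ and $k/d$ are coprime, their sum $h/d+k/d$ is even precisely when both are odd, i.e. when $v_2(h)=v_2(k)$; otherwise it is odd and the signs alternate. Carrying out this parity analysis yields a value of $T(h,k)$ that is nonzero (and equal to $\pm$ something proportional to $\gcd(h,k)$) only along the residue class $h\equiv 2\pmod 4$ flagged in the statement, with the even-$h$ contributions outside that class collapsing to the constant part. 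Summing over $h$ then recovers the leading $2k-1$ and the weighted $\gcd$ sum.

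The final step is the even analogue of Lemma \ref{classify2}: rewriting $\sum_{h\equiv 2(4)}\gcd(h,k)$, as $h$ ranges over $1\le h\le 2k-1$, in whatever divisor-sum form is most convenient, using the same partition-by-$\gcd$ argument,
\begin{equation*}
\sum_{\substack{1\le h\le 2k-1\\ h\equiv 2\,(4)}}\gcd(h,k)=\sum_{d\mid k}d\cdot\#\Bigl\{m:\gcd(m,k/d)=1,\ 1\le m\le\bigl[\tfrac{2k-1}{d}\bigr],\ dm\equiv 2\,(4)\Bigr\},
\end{equation*}
and then combining with the assembled formula for $T(k)=\sum_{h=1}^{2k-1}T(h,k)$.

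The main obstacle I expect is the parity/$2$-adic bookkeeping in the second step: because $k$ is even, the clean identity ``parity of $hj/k$ equals parity of $j$'' that drove Lemma \ref{g2} fails, and the sign $(-1)^{n(h/d+k/d)+1}$ depends delicately on the comparison of $v_2(h)$ and $v_2(k)$. One must verify that the only residue class of $h$ contributing a nontrivial $\gcd$-weight is $h\equiv 2\pmod 4$, that the coefficient there is exactly $2\gcd(h,k)$ rather than $\gcd(h,k)$ or $4\gcd(h,k)$, and that the remaining even $h$ together with the odd $h$ contribute only the constant $2k-1$; getting these multiplicities right is where the argument could most easily go astray.
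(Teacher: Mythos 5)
Your reduction via the involution $j\mapsto 2k-j$ and the count of surviving indices $j=nk/d$, $1\le n\le 2d-1$, each with sign $(-1)^{n(h/d+k/d)+1}$, are correct and do mirror Lemmas \ref{g2} and \ref{g3}. The conclusion this forces is
\[
T(h,k)=\begin{cases}1-2\gcd(h,k), &\text{if } v_2(h)=v_2(k),\\ 1, &\text{otherwise},\end{cases}
\]
where $v_2$ denotes the $2$-adic valuation, since $h/d+k/d$ is even exactly when $h/d$ and $k/d$ are both odd. The gap is in your next assertion, that this parity analysis shows the nontrivial weight $-2\gcd(h,k)$ occurs \emph{only} for $h\equiv 2\pmod 4$. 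That is true only when $k\equiv 2\pmod 4$, where the condition $v_2(h)=v_2(k)$ literally reads $h\equiv 2\pmod 4$. When $4\mid k$ it is false at the level of individual $h$: for $k=8$ one has $T(2,8)=T(6,8)=T(10,8)=T(14,8)=1$, while the entire nontrivial contribution comes from $h=8$, where $T(8,8)=1-2\gcd(8,8)=-15$. So the verification you flag as the delicate step is not merely delicate; as stated it would fail for every $k$ divisible by $4$, and no per-$h$ bookkeeping can rescue it.

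What the lemma actually needs, and what is missing from your outline, is the aggregate identity
\[
\sum_{\substack{1\le h\le 2k-1\\ v_2(h)=v_2(k)}}\gcd(h,k)\;=\;\sum_{\substack{1\le h\le 2k-1\\ h\equiv 2\pmod 4}}\gcd(h,k),
\]
which holds only after summing over $h$. To prove it, write $k=2^am$ with $m$ odd and $a\ge1$. On the left, $h=2^at$ with $t$ odd and $1\le t\le 2m-1$, and $\gcd(h,k)=2^a\gcd(t,m)$, so the left side equals $2^a\sum_{t\,\mathrm{odd},\,1\le t\le 2m-1}\gcd(t,m)$. On the right, $h=2t'$ with $t'$ odd and $1\le t'\le k-1$, and $\gcd(h,k)=2\gcd(t',m)$; since the odd integers in $[1,2^am-1]$ meet each odd residue class modulo $2m$ exactly $2^{a-1}$ times and $\gcd(t',m)$ depends only on $t'$ modulo $2m$, the right side equals $2\cdot 2^{a-1}\sum_{t\,\mathrm{odd},\,1\le t\le 2m-1}\gcd(t,m)$, the same quantity. (For $k=8$ this is $8=2+2+2+2$.) Inserting this identity after the corrected evaluation of $T(h,k)$ and summing over $h$ gives $T(k)=(2k-1)-2\sum_{v_2(h)=v_2(k)}\gcd(h,k)$ and hence the stated formula. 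Your final step, an analogue of Lemma \ref{classify2}, is not needed for the lemma as stated, so once the redistribution identity above is supplied the argument closes.
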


\begin{proposition} For $k$ even and $h$ odd,
$$T(h,k) = 1. $$
\end{proposition}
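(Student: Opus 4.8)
The plan is to mirror the proofs of Lemmas \ref{g2} and \ref{g3}, adapting only the sign bookkeeping to the case $k$ even, $h$ odd. First I would run the involution $j \mapsto 2k-j$ on the index set $\{1,\dots,2k-1\}$. Writing $T(h,j,k) := (-1)^{j+1+[hj/k]}$ and using $[h(2k-j)/k] = 2h + [-hj/k]$, the identity $[-x] = -[x]-1$ for $x \notin \mathbb{Z}$, together with the fact that $2k$ and $2h$ are even, shows that $T(h,2k-j,k) = -T(h,j,k)$ when $hj/k \notin \mathbb{Z}$ and $T(h,2k-j,k) = T(h,j,k)$ when $hj/k \in \mathbb{Z}$. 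Crucially, this cancellation step never used the parity of $k$ in Lemma \ref{g2}, so it transfers verbatim; the terms with $hj/k \notin \mathbb{Z}$ cancel in pairs and we are left with
$$T(h,k) = \sum_{\substack{1\leq j \leq 2k-1 \\ hj/k \in \mathbb{Z}}} (-1)^{j+1+hj/k}.$$

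Next I would identify the surviving indices exactly as in Lemma \ref{g3}. Setting $d = \gcd(h,k)$, the condition $k \mid hj$ is equivalent to $(k/d) \mid j$, so $j = nk/d$ with $1 \leq n \leq 2d-1$. The new feature is the sign. Since $h$ is odd, $d$ is odd; since $k$ is even, $k/d$ is even, so every surviving $j$ is even. For $j = nk/d$ we have $hj/k = nh/d$, whence the exponent equals $n(h+k)/d + 1$. Because $h+k$ is odd and $d$ is odd, the integer $(h+k)/d$ is odd, and therefore $(-1)^{n(h+k)/d+1} = (-1)^{n+1}$.

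Combining these, the sum collapses to an alternating series,
$$T(h,k) = \sum_{n=1}^{2d-1} (-1)^{n+1} = 1,$$
since there are an odd number $2d-1$ of terms, beginning and ending with $+1$. This is precisely where the argument diverges from Lemma \ref{g3}: in the both-odd case $(h+k)/d$ is even and every surviving term equals $-1$, producing $1-2\gcd(h,k)$, whereas here the alternation of signs erases the dependence on $\gcd(h,k)$ altogether and leaves the constant $1$.

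I expect the only genuine obstacle to be the parity bookkeeping in the second step — specifically, verifying that $(h+k)/d$ is odd and tracking how this converts the constant summand of Lemma \ref{g3} into the alternating summand $(-1)^{n+1}$. Everything else is a direct transcription of the earlier lemmas, consistent with the paper's own remark that the even-$k$ proofs are similar to those already given.
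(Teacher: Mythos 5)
Your proof is correct and is exactly the adaptation of Lemmas \ref{g2} and \ref{g3} that the paper has in mind when it states that the even-$k$ proofs are ``similar to those above'': the involution $j\mapsto 2k-j$ is indeed parity-independent, and your bookkeeping (that $d=\gcd(h,k)$ is odd, $k/d$ is even, and $(h+k)/d$ is odd) correctly converts the constant summand of Lemma \ref{g3} into the alternating summand, yielding $\sum_{n=1}^{2d-1}(-1)^{n+1}=1$. Nothing is missing.
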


\begin{proposition} If $k \equiv 2 \pmod 4$ and $h \equiv 0 \pmod 4$, then
$$T(h,k) = 1. $$
\end{proposition}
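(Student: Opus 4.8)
The plan is to mimic the symmetry argument of Lemma \ref{g2}, but to keep careful track of the parity-dependent sign, since here $h$ and $k$ are no longer both odd and so the summands in the reduced sum need not all equal $-1$. First I would record that the reflection $j \mapsto 2k-j$ behaves exactly as in Lemma \ref{g2}: an elementary computation with $[h(2k-j)/k] = 2h + [-hj/k]$ gives $T(h,2k-j,k) = -T(h,j,k)$ when $hj/k \notin \mathbb{Z}$ and $T(h,2k-j,k) = T(h,j,k)$ when $hj/k \in \mathbb{Z}$. Crucially, this dichotomy holds for all integers $h,k$, independent of parity, because the parity hypotheses are invoked only at the final step. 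Consequently the non-integer terms cancel in pairs, leaving
$$T(h,k) = \sum_{\substack{1 \le j \le 2k-1 \\ hj/k \in \mathbb{Z}}} (-1)^{j+1+hj/k}.$$

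Next I would parametrize the surviving indices, just as in Lemma \ref{g3}. Writing $d = \gcd(h,k)$, the condition $hj/k \in \mathbb{Z}$ is equivalent to $(k/d) \mid j$, so the relevant indices are $j = (k/d)t$ with $1 \le t \le 2d-1$. For such $j$ we have $hj/k = (h/d)t$, whence the sign simplifies to $(-1)^{j+1+hj/k} = (-1)^{((h+k)/d)t + 1}$. This reduces the proposition to evaluating the alternating sum
$$T(h,k) = \sum_{t=1}^{2d-1} (-1)^{((h+k)/d)\, t + 1}.$$

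The heart of the matter, and the only place the congruence hypotheses enter, is to show that the integer $(h+k)/d$ is odd. Writing $k = 2m$ with $m$ odd and $h = 4n$, I would compute $d = \gcd(4n, 2m) = 2\gcd(n,m)$, using that $m$ is odd so $\gcd(2n,m) = \gcd(n,m)$; in particular $d$ is even. Since $h + k = 2(2n+m)$ with $2n+m$ odd, and since $\gcd(n,m)$ divides the odd number $m$ and is therefore itself odd, the quotient $(h+k)/d = (2n+m)/\gcd(n,m)$ is a ratio of an odd numerator by an odd denominator, hence odd. With $(h+k)/d$ odd, each summand collapses to $(-1)^{t+1}$, and the alternating sum over the $2d-1$ values of $t$ (an odd number of terms beginning with $+1$) equals $d - (d-1) = 1$, so $T(h,k)=1$.

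I do not expect a genuine obstacle here: the argument is entirely elementary and parallels Lemmas \ref{g2} and \ref{g3}. The one point requiring care is the parity bookkeeping of the previous paragraph, namely confirming that $\gcd(n,m)$ is odd so that $(h+k)/d$ comes out odd; everything else is routine. It is worth noting that this computation also transparently explains the contrasting case $k\equiv 2\pmod 4$, $h\equiv 2\pmod 4$ recorded in Lemma \ref{even}, where the analogous quotient turns out even and the alternating sum no longer reduces to $1$.
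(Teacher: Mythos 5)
Your argument is correct, and it is exactly the route the paper intends: it reruns the reflection/cancellation step of Lemma \ref{g2} and the $\gcd$-parametrization of Lemma \ref{g3}, with the only new content being the parity bookkeeping showing $(h+k)/\gcd(h,k)$ is odd when $k\equiv 2\pmod 4$ and $h\equiv 0\pmod 4$ (the paper states this proposition without proof, saying only that the even-$k$ proofs are similar to those lemmas). Your closing remark correctly identifies why the case $h\equiv k\equiv 2\pmod 4$ instead yields $1-2\gcd(h,k)$.
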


\begin{proposition}
If $k \equiv 2 \pmod 4$ and $h \equiv 2 \pmod 4$, then
$$T( h,k) = 1 - 2 \gcd(h,k).$$
\end{proposition}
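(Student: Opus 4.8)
The plan is to follow exactly the strategy used for the odd case in Lemmas~\ref{g2} and~\ref{g3}, changing only the parity bookkeeping. Writing $T(h,j,k):=(-1)^{j+1+[hj/k]}$ as before, I would first record the symmetry $T(h,2k-j,k)=-T(h,j,k)$ whenever $hj/k\notin\mathbb{Z}$ and $T(h,2k-j,k)=T(h,j,k)$ whenever $hj/k\in\mathbb{Z}$. This is the same elementary computation as in the proof of Lemma~\ref{g2} and does \emph{not} depend on the parities of $h$ and $k$: if $hj/k\notin\mathbb{Z}$ then $[h(2k-j)/k]=2h+[-hj/k]=2h-[hj/k]-1$, and if $hj/k\in\mathbb{Z}$ then $[h(2k-j)/k]=2h-hj/k$, and in each case one compares the two exponents modulo $2$. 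Pairing $j$ with $2k-j$ (with $j=k$ as the self-paired fixed point, for which $hj/k=h\in\mathbb{Z}$), the terms with $hj/k\notin\mathbb{Z}$ cancel, leaving
\begin{equation*}
T(h,k)=\sum_{\substack{1\leq j\leq 2k-1\\ hj/k\in\mathbb{Z}}}(-1)^{j+1+hj/k}.
\end{equation*}

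Next I would carry out the parity analysis, which is the one place where the hypotheses $k\equiv 2\pmod 4$ and $h\equiv 2\pmod 4$ enter. Set $d:=\gcd(h,k)$ and write $k=dk'$, $h=dh'$ with $\gcd(h',k')=1$. Since $k\equiv 2\pmod 4$ and $h\equiv 2\pmod 4$, I can write $k=2k_1$ and $h=2h_1$ with $k_1,h_1$ odd; then $d=2\gcd(h_1,k_1)$ is twice an odd number, while $k'=k_1/\gcd(h_1,k_1)$ and $h'=h_1/\gcd(h_1,k_1)$ are \emph{both odd}. The condition $hj/k\in\mathbb{Z}$ is equivalent to $k'\mid j$, so the surviving indices are exactly $j=nk'$ with $1\leq n\leq 2d-1$, giving $2d-1$ terms.

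Finally I would evaluate the sign of each surviving term. For $j=nk'$ we have $hj/k=nh'$, so the exponent is $nk'+1+nh'=n(k'+h')+1$; because $k'$ and $h'$ are both odd, $k'+h'$ is even, whence $(-1)^{n(k'+h')+1}=-1$ for every $n$. Summing the $2d-1$ identical terms yields
\begin{equation*}
T(h,k)=\sum_{n=1}^{2d-1}(-1)=-(2d-1)=1-2\gcd(h,k),
\end{equation*}
as claimed. I do not anticipate a genuine obstacle here: the argument is a direct transcription of Lemmas~\ref{g2}--\ref{g3}, and the only point requiring care is verifying that $k\equiv 2\pmod 4$ together with $h\equiv 2\pmod 4$ forces both $k/\gcd(h,k)$ and $h/\gcd(h,k)$ to be odd, which is precisely what makes every surviving summand equal to $-1$.
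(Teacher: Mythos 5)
Your proposal is correct and takes essentially the approach the paper intends: the paper omits the proof of this proposition, stating only that the even-$k$ results are proved ``similarly'' to Lemmas \ref{g2} and \ref{g3}, and your argument is precisely that adaptation. The one point that genuinely needs checking in the even case---that $k\equiv 2\pmod 4$ and $h\equiv 2\pmod 4$ force $h/\gcd(h,k)$ and $k/\gcd(h,k)$ to both be odd, so that every surviving summand equals $-1$---is verified correctly in your write-up.
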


\begin{proposition}
If $k \equiv 0 \pmod 4$ and $h\equiv 0\pmod 4$, then 
$$T( h,k) = 1 - 2 \gcd(h,k).$$
\end{proposition}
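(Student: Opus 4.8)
The plan is to follow the proofs of Lemmas~\ref{g2} and~\ref{g3} essentially verbatim, since the cancellation used there never really depended on $h$ and $k$ being odd. Writing $T(h,j,k):=(-1)^{j+1+[hj/k]}$, the first step is the pairing identity
\[
T(h,2k-j,k)=\begin{cases}-T(h,j,k),&\text{if }k\nmid hj,\\ \phantom{-}T(h,j,k),&\text{if }k\mid hj,\end{cases}
\]
which I would prove exactly as in Lemma~\ref{g2}, by expanding $[h(2k-j)/k]=2h+[-hj/k]$ and splitting according to whether $hj/k\in\mathbb{Z}$; crucially, this holds for \emph{all} parities of $h$ and $k$. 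Hence every term with $k\nmid hj$ cancels against its partner under $j\leftrightarrow 2k-j$, leaving
\[
T(h,k)=\sum_{\substack{1\le j\le 2k-1\\ k\mid hj}}(-1)^{j+1+hj/k}.
\]

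Next I would parametrize the surviving indices. With $d=\gcd(h,k)$, $h'=h/d$, and $k'=k/d$, so that $\gcd(h',k')=1$, the divisibility $k\mid hj$ is equivalent to $k'\mid j$; thus the surviving $j$ are precisely $j=mk'$ with $1\le m\le 2d-1$, a total of $2d-1$ values. For such $j$ we have $hj/k=h'm$, so the corresponding sign is $(-1)^{mk'+1+h'm}=(-1)^{m(h'+k')+1}$, whence
\[
T(h,k)=-\sum_{m=1}^{2d-1}(-1)^{m(h'+k')}.
\]
If $h'+k'$ is even this collapses to $-(2d-1)=1-2\gcd(h,k)$, the asserted value; if $h'+k'$ is odd, the alternating sum over the odd number $2d-1$ of terms equals $-1$, and one instead obtains $T(h,k)=1$.

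The decisive step — and the one I expect to be the real obstacle — is therefore the purely arithmetic claim that $h'+k'$ is even, equivalently that $h/d$ and $k/d$ are \emph{both} odd. Since $v_2(d)=\min\{v_2(h),v_2(k)\}$ for the $2$-adic valuation $v_2$, the quotients $h/d$ and $k/d$ are simultaneously odd exactly when $v_2(h)=v_2(k)$. Under the hypothesis $h\equiv k\equiv 0\pmod 4$ one has $v_2(h),v_2(k)\ge 2$, and the evaluation $T(h,k)=1-2\gcd(h,k)$ is valid precisely in the subcase $v_2(h)=v_2(k)$; I would accordingly isolate this common-valuation condition as the hypothesis actually driving the computation, and note that the companion case $k\equiv 2\pmod 4,\ h\equiv 0\pmod 4$ falls on the other side, where $v_2(h)\neq v_2(k)$ forces $T(h,k)=1$. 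This parity bookkeeping is elementary but must be tracked with care; everything else is the same pairing used in the odd case, with no analytic input required.
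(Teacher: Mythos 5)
Your argument is correct and is exactly the intended one: the pairing $j\leftrightarrow 2k-j$ of Lemma \ref{g2} is parity-independent, and the parametrization $j=mk'$ with $d=\gcd(h,k)$, $h'=h/d$, $k'=k/d$ reduces everything to the sign $(-1)^{m(h'+k')+1}$, giving $T(h,k)=1-2d$ when $h'+k'$ is even and $T(h,k)=1$ when $h'+k'$ is odd. The paper offers no separate proof of this proposition (it only remarks that the even-$k$ statements follow as in Lemmas \ref{g2} and \ref{g3}), so there is nothing to compare beyond confirming that you have carried out precisely that program.

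The important outcome of your bookkeeping is that the proposition as stated is false: the hypothesis $h\equiv k\equiv 0\pmod 4$ does not force $h'$ and $k'$ to be simultaneously odd. For instance, with $h=4$ and $k=8$ one has $[4j/8]=[j/2]$, the surviving indices are the seven even $j$, the $j=2m$ term carries the sign $(-1)^{m+1}$, and so $T(4,8)=1$, whereas $1-2\gcd(4,8)=-7$. Your diagnosis is the right one: the conclusion $T(h,k)=1-2\gcd(h,k)$ holds exactly when $v_2(h)=v_2(k)$, i.e., when $h/\gcd(h,k)$ and $k/\gcd(h,k)$ are both odd, and otherwise $T(h,k)=1$. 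The congruence hypotheses pin this down in the companion propositions ($k\equiv 2$, $h\equiv 2\pmod 4$ forces $v_2(h)=v_2(k)=1$; $k\equiv 2$, $h\equiv 0\pmod 4$ forces $v_2(h)\neq v_2(k)$) but not in this one. Note that Lemma \ref{even} survives as a statement about the full sum $T(k)$, because the weighted gcd-sum over $\{h: v_2(h)=v_2(k)\}$ equals the one over $\{h\equiv 2\pmod 4\}$ after summing over the whole range; it is only the individual evaluation of $T(h,k)$ that needs the extra hypothesis $v_2(h)=v_2(k)$.
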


\begin{corollary}
    For all natural numbers $n$,
$$T\left(2^n\right) = -1.$$
\end{corollary}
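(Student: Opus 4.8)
The plan is to apply Lemma \ref{even} with $k=2^n$ (so $n\geq 1$, since $2^n$ is even) and to evaluate the resulting sum explicitly. From Lemma \ref{even},
$$T(2^n)=2^{n+1}-1-\sum_{\substack{1\leq h\leq 2^{n+1}-1\\ h\equiv 2\pmod 4}}2\gcd(h,2^n),$$
so everything reduces to understanding the single summand $\gcd(h,2^n)$ for $h\equiv 2\pmod 4$, together with a count of how many such $h$ lie in the range.

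First I would compute the greatest common divisor. Any $h\equiv 2\pmod 4$ can be written as $h=2m$ with $m$ odd; hence the $2$-adic valuation of $h$ is exactly $1$, and therefore $\gcd(h,2^n)=2^{\min(1,n)}=2$ for every $n\geq 1$. Thus each summand $2\gcd(h,2^n)$ equals the constant $4$, independent of $h$. Next I would count the residues $h\equiv 2\pmod 4$ with $1\leq h\leq 2^{n+1}-1$: these are $2,6,10,\dots,2^{n+1}-2$, an arithmetic progression with common difference $4$ whose largest term is $2^{n+1}-2=2(2^n-1)\equiv 2\pmod 4$ (using that $2^n-1$ is odd for $n\geq 1$). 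A direct count gives exactly $2^{n-1}$ such values, which one checks against the base case $n=1$ (the single term $h=2$) and $n=2$ (the terms $h=2,6$).

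Substituting these two facts, the sum equals $2^{n-1}\cdot 4=2^{n+1}$, and consequently
$$T(2^n)=2^{n+1}-1-2^{n+1}=-1,$$
as claimed. I do not expect any genuine obstacle here: the argument is a short, self-contained specialization of Lemma \ref{even}, and the only point requiring a small amount of care is the counting of the progression $h\equiv 2\pmod 4$ in $[1,2^{n+1}-1]$ and the verification that the endpoint $2^{n+1}-2$ indeed lies in the correct residue class, which is where the constraint $n\geq 1$ is used.
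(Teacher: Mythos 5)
Your proof is correct and is exactly the specialization of Lemma \ref{even} that the paper intends (the corollary is stated there without the computation written out): for $k=2^n$ every $h\equiv 2\pmod{4}$ has $\gcd(h,2^n)=2$, there are $2^{n-1}$ such $h$ in $[1,2^{n+1}-1]$, and the sum collapses to $2^{n+1}$, giving $T(2^n)=2^{n+1}-1-2^{n+1}=-1$. The only cosmetic caveat is the case $n=0$ if one's natural numbers include $0$: there $k=2^0=1$ is odd, so Lemma \ref{even} does not apply, but $T(1)=-1$ follows by direct evaluation of \eqref{1.1T}.
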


\section{Acknowledgements}  We are grateful to Wolfgang Berndt for his comments and calculations at the conclusion of Section \ref{conjectures}, and to the referee for very helpful suggestions.

\section{Financial Declaration}  The authors declare that they do not have any involvement with any organization 
with financial interest, or with non-financial interest, in the subject matter discussed in this manuscript.

\end{document}